\def\ge{\geqslant}
\def\le{\leqslant}
\def\phi{\varphi}
\def\epsilon{\varepsilon}
\def\tilde{\widetilde}
\def\to{\longrightarrow}
\def\mapsto{\longmapsto}
\def\Hom{\operatorname{Hom}}
\def\Spec{\operatorname{Spec}}
\def\Div{\operatorname{div}}
\newcommand{\N}{\mathbb{N}}
\newcommand{\Q}{\mathbb{Q}} 
\newcommand{\R}{\mathbb{R}}
\newcommand{\sO}{\mathcal{O}}
\newcommand{\fa}{\mathfrak{a}}
\newcommand{\fb}{\mathfrak{b}}
\newcommand{\m}{\mathfrak{m}}
\newcommand{\U}{\mathfrak{U}}
\newcommand{\Tr}{\mathrm{Tr}}
\newcommand{\emb}{\mathrm{emb}}
\newcommand{\sh}{\mathrm{sh}}
\newcommand{\FPT}{\mathrm{FPT}}
\newcommand{\fpt}{\mathrm{fpt}}
\newcommand{\lfpt}{\mathrm{fpt}}
\newcommand{\ulim}{\operatorname{ulim}}
\newcommand{\NN}{\mathcal{N}}
\newcommand{\D}{\Delta}
\newcommand{\age}[1]{\lceil #1 \rceil}
\newcommand{\qadic}[3][q]{\langle #2 \rangle_{#3}}
\newcommand{\ultra}[1]{{}^* #1}
\newcommand{\cata}[1]{#1_\#}
\newcommand{\catae}[1]{[ #1 ]_m}
\newcommand{\ntau}[3][X]{\tau(\omega_{#1}, {#2}_{- 0}, #3)}
\newcommand{\ntri}[3][X]{(\omega_{#1}, {#2}_{- 0}; #3)}
\theoremstyle{plain}
\newtheorem{thm}{Theorem}[section] 
\newtheorem{cor}[thm]{Corollary}
\newtheorem{prop}[thm]{Proposition}
\newtheorem*{mainthm}{Main Theorem}
\newtheorem{lem}[thm]{Lemma}
\theoremstyle{definition} 
\newtheorem{defn}[thm]{Definition}
\newtheorem{propdef}[thm]{Proposition-Definition}
\theoremstyle{remark}
\newtheorem{rem}[thm]{Remark}
\newtheorem*{acknowledgement}{Acknowledgments}
\title{Ascending chain condition for $F$-pure thresholds with fixed embedding dimension}
\author{Kenta Sato}
\address{Graduate School of Mathematical Sciences, University of Tokyo, 3-8-1 Komaba, Meguro-ku, Tokyo 153-8914, Japan}
\email{ktsato@ms.u-tokyo.ac.jp}
\keywords{ascending chain condition, $F$-pure threshold, parameter test module, $F$-jumping number, locally complete intersection}
\subjclass[2010]{ 14B05, 13A35, 14M10}
\begin{document}
\tolerance = 9999

\maketitle
\markboth{KENTA SATO}{ACC FOR $F$-PURE THRESHOLDS WITH FIXED EMBEDDING DIMENSION}

\begin{abstract}
In this paper, we prove that the set of all $F$-pure thresholds of ideals with fixed embedding dimension satisfies the ascending chain condition.
As a corollary, given an integer $d$, we verify the ascending chain condition for the set of all $F$-pure thresholds on all $d$-dimensional normal l.c.i. varieties.
In the process of proving these results, we also show the rationality of $F$-pure thresholds of ideals on non-strongly $F$-regular pairs.
\end{abstract}

\section{Introduction}
A ring $R$ of characteristic $p>0$ is said to be \emph{$F$-finite} if the Frobenius morphism $F: R \to R$ is finite.
Suppose that $X$ is a normal variety over an $F$-finite field $k$ of characteristic $p>0$.
We further assume that $X$ is sharply $F$-pure, that is, the Frobenius homomorphism $\sO_X \to F_*\sO_X$ locally splits.
Then, for every coherent ideal sheaf $\fa \subsetneq \sO_X$, we can define the \emph{$F$-pure threshold} $\fpt(X; \fa) \in \R_{\ge 0}$ in terms of Frobenius splittings (see Definition \ref{fpt def} below).
Recent studies (\cite{TW}, \cite{Taka}, \cite{HnBWZ}) reveal that $F$-pure thresholds have a strong connection to log canonical thresholds in characteristic $0$. 
Moreover, as seen in \cite{TW}, \cite{MTW} and \cite{BS}, the $F$-pure threshold itself is an interesting invariant in both commutative algebra and algebraic geometry in positive characteristic.

In \cite{Sat}, motivated by the ascending chain condition for log canonical thresholds in characteristic $0$ (\cite{Sho}, \cite{dFEM}, \cite{dFEM2} and \cite{HMX}), the author studied the ascending chain condition for $F$-pure thresholds.
In {\it loc. cit.}, it was proved that the set of all $F$-pure thresholds of ideals on a fixed germ of a strongly $F$-regular pair satisfies the ascending chain condition, where strong $F$-regularity is a stronger condition than sharp $F$-purity (see Definition \ref{Fsing def}).

In this paper, we extend the result of \cite{Sat} to the case of sharply $F$-pure pair under some conditions.
The first result in this paper deals with the ascending chain condition for $F$-pure thresholds on l.c.i. varieties, which is an positive characteristic analogue of \cite[Theorem 1.3]{dFEM}.

\begin{thm}[Corollary \ref{lci ACC}]\label{intro lci}
Fix an integer $n \ge 1$ and an $F$-finite field $k$ of characteristic $p>0$.
Let $T$ be a set of all $n$-dimensional normal l.c.i. varieties over $k$ which are sharply $F$-pure.
Then the set
\[
\left\{ \fpt(X; \fa) \mid X \in T , \fa \subsetneq \sO_X \right\}
\]
satisfies the ascending chain condition.
\end{thm}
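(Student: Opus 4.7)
My plan is to reduce the statement to the main theorem of the paper (ACC for $F$-pure thresholds of ideals with fixed embedding dimension). First, since $F$-pure thresholds are local in nature --- one has $\fpt(X;\fa) = \inf_{x} \fpt(\sO_{X,x};\fa_x)$ with $x$ ranging over closed points of the cosupport of $\fa$ --- any infinite strictly ascending sequence $\fpt(X_i;\fa_i)$ produces, by choosing a closed point $x_i$ that realizes or sufficiently well approximates the infimum at stage $i$, an infinite strictly ascending sequence $\fpt(\sO_{X_i,x_i};(\fa_i)_{x_i})$ at sharply $F$-pure normal l.c.i. germs of dimension at most $n$. Completing at $x_i$ preserves both sharp $F$-purity and the $F$-pure threshold (completion is faithfully flat with geometrically regular fibers, and the base is $F$-finite), so I may assume I am working with a complete normal l.c.i. local ring that is sharply $F$-pure.

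The core step is then to bound the embedding dimension of such rings uniformly in $n$. Write
\[
R \;=\; k(x)[[z_1,\dots,z_N]] \big/ (f_1,\dots,f_c),
\]
with $N = \emb R$, each $f_i \in \m_S^2$ where $\m_S = (z_1,\dots,z_N)$, and $N - c = \dim R \le n$. Fedder's criterion for complete intersections says that sharp $F$-purity of $R$ is equivalent to $(f_1 \cdots f_c)^{p-1} \notin \m_S^{[p]}$. Every monomial of $(f_1 \cdots f_c)^{p-1}$ has total degree at least $2c(p-1)$ (since each $f_i$ has order at least $2$), while any monomial outside $\m_S^{[p]}$ has total degree at most $N(p-1)$. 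Hence $2c \le N = n + c$, so $c \le n$ and $\emb R \le 2n$.

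With the embedding dimension bounded by $2n$, the conclusion follows at once from the main theorem applied to the family of all sharply $F$-pure germs of embedding dimension at most $2n$. Essentially all the substantive work lies in the main theorem itself; the only new ingredient here is the uniform embedding-dimension bound, which is essentially classical Fedder theory. Accordingly, the main obstacle I would anticipate is not located in this corollary at all but in the main theorem; the remaining routine points --- compatibility of sharp $F$-purity and $F$-pure thresholds with localization and completion, and the realization or approximation of the infimum defining the global threshold --- cause no serious difficulty.
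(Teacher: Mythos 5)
Your reduction is the same as the paper's: pass to local rings via $\fpt(X;\fa)=\min_x\fpt(\sO_{X,x};\fa_x)$ (the paper's Lemma~\ref{fpt min} shows this is an honest minimum, so you need not worry about approximating the infimum), bound the embedding dimension of a sharply $F$-pure complete intersection by $2\dim$ via the Fedder criterion exactly as in the paper's Lemma~\ref{lci emb}, and then invoke the Main Theorem; the explicit completion step you insert is done inside the paper's proof of the Main Theorem rather than in the corollary, a cosmetic difference.

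There is one hypothesis of the Main Theorem you should verify explicitly but left unstated: the Main Theorem bounds $\FPT(T,e)$ for a \emph{fixed} $e$, and membership in $\FPT(T,e)$ requires $(p^e-1)(K_X+\D)$ to be Cartier. You are applying it with $\D=0$, so you need $K_X$ to be $\Q$-Cartier with index dividing $p^e-1$ for some fixed $e$. This is where the l.c.i.\ hypothesis is used a second time: a locally complete intersection is Gorenstein, so $K_X$ is Cartier, and one may take $e=1$. Without this observation, a family of normal, sharply $F$-pure germs of bounded embedding dimension but with unbounded (or $p$-divisible) Gorenstein index would not be covered by the Main Theorem. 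The paper's proof notes this in one line (``Since every $X\in T$ is Gorenstein, we apply the main theorem''); your write-up should do the same.
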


In the proof of \cite[Theorem 1.3]{dFEM}, they use Inversion of Adjunction, which is the reason why they consider l.c.i. varieties.
On the other hand, the theory of $F$-adjunction, introduced by Schwede (\cite{Sch-adj}), can be applied even if the variety is not a locally complete intersection.
Therefore, we can employ the same strategy in a more general setting as that of \cite{dFEM}.

Suppose that $(R,\m)$ is an $F$-finite Noetherian normal local ring of characteristic $p>0$ and $\D$ is an effective $\Q$-Weil divisor on $\Spec R$.
We further assume that the pair $(R,\D)$ is sharply $F$-pure (see Definition \ref{Fsing def}).
In this case, we can define the $F$-pure threshold $\fpt(R, \D; \fa) \in \R_{\ge 0}$ for every proper ideal $\fa \subseteq R$.
The following is the main theorem of this paper, which extends the main theorem of \cite{Sat}.

\begin{mainthm}[Theorem \ref{main}]\label{intro main}
Fix positive integers $e$ and $N$.
Suppose that $T$ is any set such that every element of $T$ is an $F$-finite Noetherian normal local ring $(R,\m, k)$ with $\dim_k (\m/\m^2) \le N$.
Let $\FPT(T,e) \subseteq \R_{\ge 0}$ be the set of all $F$-pure thresholds $\fpt(R, \D ;\fa)$ such that
\begin{itemize}
\item $R$ is an element of $T$,
\item $\fa$ is a proper ideal of $R$, and
\item $\D$ is an effective $\Q$-Weil divisor on $X=\Spec R$ such that $(R,\D)$ is sharply $F$-pure and $(p^e-1)(K_X+\D)$ is Cartier, where $K_X$ is a canonical divisor on $X$.
\end{itemize}
Then the set $\FPT(T, e)$ satisfies the ascending chain condition.
\end{mainthm}

In the process of proving the main theorem, we treat the rationality problem for $F$-pure thresholds.
In characteristic $0$, since log canonical thresholds can be computed by a single log resolution, it is obvious that the log canonical threshold of any ideal on any log $\Q$-Gorenstein pair is a rational number.
In \cite{dFEM}, they use the rationality to reduce the ascending chain condition for log canonical thresholds on l.c.i. varieties to that on smooth varieties.

However, in positive characteristic, the rationality of $F$-pure thresholds is a more subtle problem.
In \cite{ST14}, Schwede and Tucker proved that the $F$-pure threshold of any ideal on any log $\Q$-Gorenstein strongly $F$-regular pair is a rational number.
In this paper, we generalize their result to the case where the pair is not necessarily strongly $F$-regular, under the assumption that the Gorenstein index is not divisible by the characteristic.

\begin{thm}[Corollary \ref{rat}]\label{intro rat}
Suppose that $(R, \m)$ is an $F$-finite Noetherian normal local ring of characteristic $p>0$ and $\D$ is an effective $\Q$-Weil divisor on $X=\Spec R$ such that $(R,\D)$ is sharply $F$-pure and $K_X+\D$ is $\Q$-Cartier with index not divisible by $p$.
Then the $F$-pure threshold $\fpt(R,\D; \fa)$ is a rational number for every proper ideal $\fa \subseteq R$.
\end{thm}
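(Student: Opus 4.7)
The plan is to adapt the Schwede--Tucker rationality argument \cite{ST14} from the strongly $F$-regular setting to the sharply $F$-pure setting, with the test ideal replaced throughout by the parameter test submodule $\ntau{\Delta}{\fa^t} \subseteq \omega_R$. First I would reformulate the $F$-pure threshold in terms of $F$-jumping numbers of this family: because $(R, \Delta)$ is sharply $F$-pure, the submodule $\ntau{\Delta}{\fa^t}$ should be nonzero on an initial interval of $t$, and the supremum of such $t$ should agree with $\fpt(R, \Delta; \fa)$. Rationality of the $F$-pure threshold thereby reduces to showing that the $F$-jumping numbers of the family $\{\ntau{\Delta}{\fa^t}\}_{t \ge 0}$ form a discrete set of rational numbers.

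Next I would fix the Frobenius structure. Choose $e \ge 1$ with $(p^e - 1)(K_X + \Delta)$ Cartier, which is possible precisely because the index of $K_X + \Delta$ is prime to $p$. Up to twisting by an invertible sheaf, the module of $\phi$-morphisms $\Hom_R(F^e_* R, R)$ associated with $(R, \Delta)$ then becomes locally free of rank one, so it is generated by a single $\phi$. One can realize $\ntau{\Delta}{\fa^t}$ as the stable image of the $\phi$-iterates $\phi^n\bigl(F^{ne}_*\bigl(\fa^{\lceil t(p^{ne}-1) \rceil} \cdot \omega_R\bigr)\bigr)$, which reduces the study of the family to a combinatorial analysis of the exponents $\lceil t(p^{ne}-1) \rceil$.

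I would then run the pigeonhole argument of Schwede--Tucker. For a fixed $t$ the descending chain of $\phi$-images stabilizes since $\omega_R$ is Noetherian, and for $t$ varying in a sufficiently small interval the stable module is constant, giving discreteness. For rationality, one approximates a putative $F$-jumping number $t$ by the rationals $\lceil t(p^{ne}-1)\rceil /(p^{ne}-1)$; after stabilization of the associated $\phi$-modules, pigeonhole on the congruence classes of the numerators modulo $p^e - 1$ forces the exponents to satisfy a linear recurrence, which in turn pins $t$ down to a rational number with controlled denominator.

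The main obstacle will be the third step in the sharply $F$-pure (rather than strongly $F$-regular) case: Schwede--Tucker's argument exploits that the test ideal $\tau(R, \Delta, \fa^t)$ has full support, whereas here $\ntau{\Delta}{\fa^t}$ is a proper submodule of $\omega_R$ concentrated away from the strongly $F$-regular locus. One must verify that the $\phi$-stabilization happens uniformly on this smaller support, independent of $t$. I expect that Schwede's $F$-adjunction, together with the compatibility of parameter test submodules with restriction to centers of sharp $F$-purity, will allow one to replace $(R, \Delta)$ by the adjoint pair on a minimal such center, where strong $F$-regularity holds and the Schwede--Tucker machine applies verbatim; tracing the identification back yields the rationality of $\fpt(R, \Delta; \fa)$.
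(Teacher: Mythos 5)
Your outline identifies the right ingredients --- the left-limit variant $\ntau{\D}{\fa^t}$ of the parameter test module, a characterization of $\fpt$ as a jumping number of that family, the Schwede--Tucker/BSTZ arithmetic, and $F$-adjunction --- but two of the steps as written would not close, and the direction of the adjunction step is in fact reversed from what is needed.

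The main gap is the last paragraph. You propose to descend to a minimal center of $F$-purity, where the restricted pair is strongly $F$-regular, and run Schwede--Tucker there. But the paper's Proposition \ref{F-adj} is used in the \emph{opposite} direction: starting from $R$ complete, write $R = A/I$ with $A$ regular and produce $\D_A$ on $\Spec A$ with $(p^e-1)\D_A$ Cartier and $\fpt(R,\D_R;\fa) = \fpt(A,\D_A;\tilde{\fa})$, where $\tilde{\fa}$ is the lift of $\fa$. Crucially, after this step $(A,\D_A)$ is \emph{still only sharply $F$-pure}: as soon as $I \ne 0$, the ideal $I$ is uniformly $F$-compatible for $(A,\D_A)$, so $\tau(A,\D_A) \subseteq I \subsetneq A$ and the pair is not strongly $F$-regular. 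The Schwede--Tucker rationality theorem therefore does \emph{not} ``apply verbatim'' after the reduction; the whole point of introducing $\ntau{\D}{\fa^t}$ is to deal with exactly this residual sharply $F$-pure-but-not-strongly-$F$-regular situation on a regular ambient ring. Restricting instead to the minimal center of $F$-purity inside $\Spec R$ would give a strongly $F$-regular pair there, but there is no identity transferring $\fpt(R,\D;\fa)$ to the restricted triple for a general ideal $\fa$, so this route does not prove the theorem.

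The second issue is the characterization of the threshold. You ask for $\ntau{\D}{\fa^t}$ to be \emph{nonzero} on an initial interval, but parameter test submodules --- including this left-limit variant --- are nonzero by definition (they are the smallest \emph{nonzero} compatible submodule), so this gives no information. The correct statement, which is only available after reducing to the regular case, is Proposition \ref{fpt fjn}:
\[
\fpt(A,\D;\fa) = \sup\left\{\, s \ge 0 \ \middle|\ \ntau{\D}{\fa^s} = \omega_X \,\right\}, \qquad X = \Spec A,
\]
and its proof goes through Lemma \ref{tau- vs Fpure}, which converts between sharp $F$-purity of $(A,\D,\fa^t)$ and strong $F$-regularity of $(A,(1-\epsilon)\D,\fa^{t(1-\epsilon')})$ --- using that the regular ring $A$ is itself strongly $F$-regular. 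Finally, the paper does not realize $\ntau{\D}{\fa^t}$ as a stable image of $\phi$-iterates with exponents $\lceil t(p^{ne}-1)\rceil$; it \emph{defines} $\ntau{\D}{\fa^t} := \tau(\omega_X,(1-\epsilon)\D,\fa^t)$ for $0<\epsilon\ll 1$ and establishes discreteness and rationality of its jumping numbers from the $\Q$-Cartier perturbation statement (Proposition \ref{discrete1}, proved via the normalized blowup of $\fa$ and \cite[Theorem 5.1]{ST14}) combined with the BSTZ-style arithmetic of Lemma \ref{rationality lemma}. Your proposed stable-image description might be made to work, but it is not what the paper does and you would need to verify it is equivalent to the left limit before running the pigeonhole argument on it.
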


In the proof of Theorem \ref{intro rat}, we introduce a new variant of parameter test modules.
Assume that $\D$ is $\Q$-Cartier and $t \ge 0$ is a real number.
Then, we define the submodule $\ntau{\D}{\fa^t} \subseteq \omega_X$ as an approximation of the parameter test module $\tau(\omega_X, \D, \fa^t) \subseteq \omega_X$ by small perturbations of $\D$ (see Definition \ref{ntau def}).
A real number $t \ge 0$ is called an \emph{$F$-jumping number} of $\ntri{\D}{\fa}$ if one of the following hold:
\begin{enumerate}
\item for every $\epsilon>0$, we have $\ntau{\D}{ \fa^t} \subsetneq \ntau{\D}{\fa^{t-\epsilon}}$, or
\item for every $\epsilon>0$, we have $\ntau{\D}{\fa^t} \supsetneq \ntau{\D}{\fa^{t+\epsilon}}$.
\end{enumerate}
The key ingredient of the proof of Theorem \ref{intro rat} is the rationality of $F$-jumping numbers of $\ntri{\D}{\fa}$ (Corollary \ref{F-jump rat}).
%
Theorem \ref{intro rat} follows from the rationality and $F$-adjunction because the $F$-pure threshold $\fpt(R, \D; \fa)$ is equal to the first $F$-jumping number if $X=\Spec R$ is regular (Proposition \ref{fpt fjn}). 
We use Theorem \ref{intro rat} to reduce the main theorem to the ascending chain condition for $F$-pure thresholds of ideals on a fixed $F$-finite regular local ring, which has already been proved in \cite{Sat}.

\begin{acknowledgement}
The author wishes to express his gratitude to his supervisor Professor Shunsuke Takagi for his encouragement, valuable advice and suggestions.
He was supported by JSPS KAKENHI Grant Number 17J04317.
\end{acknowledgement}
\section{Preliminaries}

\subsection{$F$-singularities}

In this subsection, we recall the definitions and some basic properties of $F$-singularities.

A ring $R$ of characteristic $p>0$ is said to be \emph{$F$-finite} if the Frobenius morphism $F: R \to R$ is a finite ring homomorphism.
A scheme $X$ is said to be \emph{$F$-finite} if for every open affine subscheme $U \subseteq X$, $\sO_U$ is $F$-finite.
If $R$ is an $F$-finite Noetherian normal domain, then $R$ is excellent (\cite{Kun}) and $X=\Spec R$ has a \emph{dualizing complex} $\omega_X^{\bullet}$, a \emph{canonical module} $\omega_X$ and a \emph{canonical divisor} $K_X$ (see for example \cite[p.4]{ST17}).

Through this paper, all rings will be assumed to be $F$-finite of characteristic $p>0$.

\begin{defn}
A \emph{pair} $(R, \D)$ consists of an $F$-finite Noetherian normal local ring $(R,\m)$ and an effective $\Q$-Weil divisor $\D$ on $X$.
A \emph{triple} $(R,\D, \fa^t)$ consists of a pair $(R,\D)$ and a symbol $\fa^t$, where $\fa \subseteq R$ is an ideal and $t \ge 0$ is a real number.
\end{defn} 

\begin{defn}\label{Fsing def}
Let $(R,\D, \fa^t)$ be a triple.
\begin{enumerate}
\item
$(R, \D, \fa^t)$ is said to be \emph{sharply $F$-pure} if there exist an integer $e>0$ and a morphism $\phi \in \Hom_R(F^e_* R( \age{(p^e-1) \Delta}), R)$ such that $\phi( F^e_* \fa^{\age{t (p^e -1)}})=R$.
\item
$(R, \D, \fa^t)$ is said to be \emph{strongly $F$-regular} if for every non-zero element $c \in R$, there exist an integer $e>0$ and a morphism $\phi \in \Hom_R(F^e_* R( \age{(p^e-1) \Delta}), R)$ such that $\phi( F^e_* (c \fa^{\age{t (p^e -1)}}))=R$.
\end{enumerate}
\end{defn}

\begin{lem}\label{F-sing basic}
Let $(R, \D, \fa^t)$ be a triple.
Then the following hold.
\begin{enumerate}
\item If $(R, \D, \fa^t)$ is strongly $F$-regular, then it is sharply $F$-pure.
\item Suppose that $0 \le \D' \le \D$ is an $\Q$-Weil divisor and $0 \le t' \le t$ is a real number.
If $(R, \D, \fa^t)$ is strongly $F$-regular (resp. sharply $F$-pure), then so is $(R, \D', \fa^{t'})$.
\item If $(R, \D)$ is strongly $F$-regular and $(R, \D, \fa^t)$ is sharply $F$-pure, then $(R, \D, \fa^{s})$ is strongly $F$-regular for every $0 \le s<t$.
\item Let $f \in R$ be a non-zero element and $\fb : = f \cdot \fa \subseteq R$.
Then, $(R, \D + t \Div_R (f), \fa^t)$ is strongly $F$-regular if and only if $(R, \D, \fb^t)$ is strongly $F$-regular.
\item Let $\widehat{R}$ be the $\m$-adic completion and $\widehat{\D}$ the flat pullback of $\D$ to $\Spec \widehat{R}$.
Then, $(R,\D, \fa^t)$ is sharply $F$-pure if and only if $(\widehat{R}, \widehat{\D}, (\fa \widehat{R})^t)$ is sharply $F$-pure.
\end{enumerate}
\end{lem}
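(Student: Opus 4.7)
The plan is to handle the five assertions separately, with parts (1), (2), (5) being essentially formal, part (4) a direct manipulation, and part (3) the main technical heart.

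For (1), specialize the strong $F$-regularity condition to $c = 1$. For (2), the inclusion $R(\age{(p^e-1)\D'}) \subseteq R(\age{(p^e-1)\D})$ (from $\D' \le \D$) and the containment $\fa^{\age{t(p^e-1)}} \subseteq \fa^{\age{t'(p^e-1)}}$ (from $t' \le t$) allow any witness $(e, \phi)$ for $(R, \D, \fa^t)$ to restrict to a witness for $(R, \D', \fa^{t'})$ in both the sharp $F$-pure and the strongly $F$-regular case. For (5), $F$-finiteness of $R$ makes $F^e_*R(\age{(p^e-1)\D})$ a finitely presented $R$-module, so $\Hom_R(F^e_*R(\age{(p^e-1)\D}), R) \otimes_R \widehat{R}$ is naturally identified with $\Hom_{\widehat{R}}(F^e_*\widehat{R}(\age{(p^e-1)\widehat{\D}}), \widehat{R})$; the condition ``image equals the ring'' reduces to ``$1$ lies in the image'' since these are ideals in local rings, and this is preserved and reflected by the faithfully flat extension $R \to \widehat{R}$.

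For (4), I would use the multiplication-by-$f^{\age{t(p^e-1)}}$ dictionary: the assignment $\phi'(F^e_*(x)) := \phi(F^e_*(f^{\age{t(p^e-1)}}x))$ converts a map in $\Hom_R(F^e_*R(\age{(p^e-1)\D}), R)$ into one in $\Hom_R(F^e_*R(\age{(p^e-1)(\D+t\Div_R(f))}), R)$ (the divisor comparison holds precisely because $f^{\age{t(p^e-1)}}$ acquires the necessary vanishing along $\Div_R(f)$) and vice versa. Under this bijection, $\phi(F^e_*(c\fb^{\age{t(p^e-1)}})) = \phi(F^e_*(cf^{\age{t(p^e-1)}}\fa^{\age{t(p^e-1)}}))$ on one side matches $\phi'(F^e_*(c\fa^{\age{t(p^e-1)}}))$ on the other, yielding the equivalence.

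The heart of the lemma is (3). Given a nonzero $c \in R$ and $0 \le s < t$, the plan is to \emph{iterate} the sharp $F$-purity witness and \emph{combine} it with the strong $F$-regularity of $(R, \D)$ applied to $c$. Let $(e_0, \phi_0)$ witness sharp $F$-purity, so $\phi_0(F^{e_0}_* b_0) = 1$ for some $b_0 \in \fa^{\age{t(p^{e_0}-1)}}$. The $n$-fold iterate $\phi_0^{(n)} := \phi_0 \circ F^{e_0}_*\phi_0 \circ \cdots \circ F^{(n-1)e_0}_*\phi_0$ lies in $\Hom_R(F^{ne_0}_*R(\age{(p^{ne_0}-1)\D}), R)$ (using the telescoping $(p^{ne_0}-1) = \sum_{i=0}^{n-1} p^{ie_0}(p^{e_0}-1)$) and satisfies $\phi_0^{(n)}(F^{ne_0}_*b_n) = 1$ for $b_n := \prod_{i=0}^{n-1} b_0^{p^{ie_0}} \in \fa^{M_n}$ with $M_n := \age{t(p^{e_0}-1)} \cdot (p^{ne_0}-1)/(p^{e_0}-1)$. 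Let $(e_1, \psi)$ witness strong $F$-regularity of $(R, \D)$ with respect to $c$. Setting $e := ne_0 + e_1$ and $\phi := \phi_0^{(n)} \circ F^{ne_0}_*\psi$, the projection formula gives
\[
\phi(F^e_*(c \cdot b_n^{p^{e_1}})) = \phi_0^{(n)}\bigl(F^{ne_0}_*(b_n \cdot \psi(F^{e_1}_*c))\bigr) = \phi_0^{(n)}(F^{ne_0}_*b_n) = 1.
\]
Since $p^{e_1}M_n/(p^e - 1) \to \age{t(p^{e_0}-1)}/(p^{e_0}-1) \ge t > s$ as $n \to \infty$, we can choose $n$ large enough that $p^{e_1}M_n \ge \age{s(p^e-1)}$, whence $b_n^{p^{e_1}} \in \fa^{\age{s(p^e-1)}}$ and $\phi(F^e_*(c\fa^{\age{s(p^e-1)}})) = R$. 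The main obstacle is making the iteration compatible with the divisor condition $\age{(p^e-1)\D}$ across the composition, and verifying the asymptotic inequality that allows $n$ to be chosen; both rest on the slack between $s$ and $t$.
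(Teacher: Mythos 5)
Your arguments for (1), (2), (3) and (5) are correct and (modulo the deferral in the paper of (3) and (4) to references) essentially match the paper's approach.  For (3), the paper simply cites \cite[Proposition 2.2 (5)]{TW}, and your iteration-plus-splicing argument is the standard proof of that fact; the one point you flag as an ``obstacle'' --- compatibility of the divisor twist $\age{(p^e-1)\D}$ across the composition --- works because $p^{ne_0+e_1}-1 = (p^{e_1}-1) + p^{e_1}(p^{ne_0}-1)$ and ceiling is subadditive, so $R(\age{(p^e-1)\D}) \subseteq R(\age{(p^{e_1}-1)\D} + p^{e_1}\age{(p^{ne_0}-1)\D})$, which is exactly the domain on which the composition $\phi_0^{(n)}\circ F^{ne_0}_*\psi$ is naturally defined.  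For (5), your $\Hom$-base-change argument is exactly what the paper does.

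There is, however, a genuine gap in your proof of (4).  The assignment $\phi \mapsto \phi'$ given by $\phi'(F^e_*x) := \phi(F^e_*(f^{\age{t(p^e-1)}}x))$ is the precomposition with multiplication by $f^{\age{t(p^e-1)}}$, viewed as a map $\mu: R(\age{(p^e-1)(\D+t\Div_R f)}) \to R(\age{(p^e-1)\D})$.  It is well defined precisely because of the subadditivity estimate $\age{(p^e-1)(\D+t\Div_R f)} \le \age{(p^e-1)\D} + \age{t(p^e-1)}\Div_R f$, and it gives the implication ``$(R,\D,\fb^t)$ strongly $F$-regular $\Rightarrow$ $(R,\D+t\Div_R f,\fa^t)$ strongly $F$-regular.''  But $\mu$ is \emph{not} surjective in general (the subadditivity inequality is strict whenever, for some prime component $P$ of $\Div_R f$ with coefficients $a$ in $\D$ and $b\ge 1$ in $\Div_R f$, the quantities $\age{(p^e-1)a + (p^e-1)tb}$ and $\age{(p^e-1)a} + b\age{(p^e-1)t}$ differ, e.g.\ $a=0$, $(p^e-1)t=3/2$, $b=2$), so $\phi\mapsto\phi'$ is injective but not a bijection.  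Consequently, given $\phi'$ witnessing strong $F$-regularity of $(R,\D+t\Div_R f,\fa^t)$ for $c$, there need not be a $\phi$ lifting it, and the alternative of simply restricting $\phi'$ to $F^e_*R(\age{(p^e-1)\D})$ (using $\D \le \D + t\Div_R f$) produces a $\phi$ for which $\phi(F^e_*(c\fb^{\age{t(p^e-1)}})) = \phi'(F^e_*(cf^{\age{t(p^e-1)}}\fa^{\age{t(p^e-1)}}))$, and this is not implied by $\phi'(F^e_*(c\fa^{\age{t(p^e-1)}})) = R$.  The paper avoids this by invoking \cite[Lemma 3.1]{Sch-note}, i.e.\ the equality of test ideals $\tau(R,\D,\fb^t) = \tau(R,\D+t\Div_R f,\fa^t)$, together with the characterization of strong $F$-regularity via triviality of the test ideal; that route handles the rounding discrepancy uniformly by letting $e$ stabilize.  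Note that the converse direction of (4) is actually used in the paper (in the proof of Lemma \ref{fpt concent}), so this is not a cosmetic issue.
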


\begin{proof}
(1) and (2) follow from definitions.
The proof of (3) is similar to that of \cite[Proposition 2.2 (5)]{TW}.
(4) follows from \cite[Lemma 3.1]{Sch-note}.
For (5), we define $I : = \sum_{e, \phi} \phi( F^e_* \fa^{\age{t(p^e-1)}}) \subseteq R$ (resp. $I' := \sum_{e, \phi} \phi(F^e_* (\fa \widehat{R})^{\age{t(p^e-1)}}) \subseteq \widehat{R}$), where $e$ runs through all positive integers and $\phi$ runs through all elements in $\Hom(F^e_*R(\age{(p^e-1)\D}),R)$ (resp. in $\Hom_{\widehat{R}} (F^e_* \widehat{R} (\age{(p^e-1)\widehat{\D}}), \widehat{R})$).
Then the triple $(R,\D, \fa^t)$ (resp. the triple $(\widehat{R}, \widehat{\D}, (\fa \widehat{R})^t)$) is sharply $F$-pure if and only if $I=R$ (resp. $I'=\widehat{R}$).
Since 
\[
\Hom_{\widehat{R}} (F^e_* \widehat{R} (\age{(p^e-1)\widehat{\D}}), \widehat{R}) \cong \Hom(F^e_*R(\age{(p^e-1)\D}),R) \otimes_R \widehat{R},
\]
we have $I'=I \widehat{R}$, which completes the proof.
\end{proof}

Suppose that $R$ is a ring of characteristic $p>0$, $e>0$ is a positive integer and $\fa \subseteq R$ is an ideal.
Then we denote by $\fa^{[p^e]}$ the ideal of $R$ generated by $\{ f^{p^e} \in R \mid f \in \fa \}.$
The following lemma is a variant of Fedder-type criteria.

\begin{lem}[cf. \textup{\cite{Fed}, \cite[Proposition 2.6]{HW}}]\label{Fedder}
Suppose that $(A,\m)$ is an $F$-finite regular local ring of characteristic $p>0$, $\fa \subseteq A$ is an ideal and $\D=\Div_A (f)/ (p^e-1)$ is an effective $\Q$-divisor with $f \in A$ and $e>0$.
Then, the triple $(A, \D, \fa^t)$ is sharply $F$-pure if and only if there exists an integer $n>0$ such that 
\[
f^{\frac{p^{en}-1}{p^e-1}} \fa^{\age{t(p^{en}-1)}} \not\in \m^{[p^{en}]}.
\]
\end{lem}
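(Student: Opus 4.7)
The plan is to unwind Definition \ref{Fsing def}(1) and translate it, via the standard Fedder-type dictionary between $\Hom_A(F^{e'}_*A,A)$ and $A$ itself in a regular local ring, into an explicit ideal containment. First I would argue that one may always choose the parameter $e'$ in the definition to be a multiple of $e$. Given a splitting $\phi_0\colon F^{e'_0}_* A(\lceil(p^{e'_0}-1)\D\rceil)\to A$ witnessing sharp $F$-purity, the iterated composition $\phi_0\circ F^{e'_0}_*\phi_0\circ\cdots\circ F^{(n-1)e'_0}_*\phi_0$ produces an analogous splitting at level $ne'_0$, using the identity $\tfrac{p^{ne'_0}-1}{p^{e'_0}-1}=\sum_{i=0}^{n-1}p^{ie'_0}$ to handle the $\D$-twist and the inequality $\lceil t(p^{ne'_0}-1)\rceil \le \sum_{i=0}^{n-1} p^{ie'_0}\lceil t(p^{e'_0}-1)\rceil$ to handle the $\fa$-twist. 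Taking a common multiple of $e$ and $e'_0$ reduces us to $e'=en$ for some $n>0$.

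For such $e'=en$ the divisor $(p^{en}-1)\D = \tfrac{p^{en}-1}{p^e-1}\Div_A(f)$ is already integral, so $A(\lceil(p^{en}-1)\D\rceil)$ coincides with the principal fractional ideal $f^{-(p^{en}-1)/(p^e-1)}A \subseteq \mathrm{Frac}(A)$. Since $A$ is regular, $\Hom_A(F^{en}_* A, A)$ is a free $F^{en}_*A$-module of rank one, generated by a Frobenius trace $\Phi$. Identifying $F^{en}_* A(\lceil(p^{en}-1)\D\rceil)$ with $F^{en}_*A$ via multiplication by $f^{(p^{en}-1)/(p^e-1)}$, every $\phi \in \Hom_A(F^{en}_* A(\lceil(p^{en}-1)\D\rceil), A)$ takes the form $\phi(F^{en}_* y)=\Phi\bigl(F^{en}_*(h\, f^{(p^{en}-1)/(p^e-1)} y)\bigr)$ for a unique $h\in A$. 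Invoking the standard property $\Phi(F^{en}_* J)\subseteq \m$ if and only if $J\subseteq \m^{[p^{en}]}$ for any ideal $J\subseteq A$, the surjectivity condition $\phi(F^{en}_* \fa^{\lceil t(p^{en}-1)\rceil})=A$ (equivalently, it is not contained in $\m$) becomes
\[
h\, f^{(p^{en}-1)/(p^e-1)}\, \fa^{\lceil t(p^{en}-1)\rceil}\not\subseteq \m^{[p^{en}]}.
\]
Such an $h$ exists if and only if $f^{(p^{en}-1)/(p^e-1)}\fa^{\lceil t(p^{en}-1)\rceil}\not\subseteq \m^{[p^{en}]}$: taking $h=1$ gives one direction, and the converse is clear because $h\cdot I\subseteq \m^{[p^{en}]}$ whenever $I\subseteq \m^{[p^{en}]}$.

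The main obstacle is the reduction in the first step, where the bookkeeping of the $\fa$-twist across iterated Frobenius pushforwards has to be carried out honestly; in particular one must verify the ceiling inequality above, which is routine but easy to misstate when $t$ is irrational. Once the reduction is in place, the second step is the classical identification of $\Hom_A(F^{en}_*A(D), A)$ with a principal fractional ideal for $A$ regular and $D$ Cartier, and the third step is a direct application of the defining property of the Frobenius trace, so neither introduces a genuine difficulty.
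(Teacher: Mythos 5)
Your proposal is correct and follows essentially the same route as the paper's proof: both reduce the Frobenius exponent in the definition of sharp $F$-purity to a multiple of $e$, identify the $\D$-twisted $\Hom$-module with a principal module over $F^{en}_*A$ (possible because $(p^{en}-1)\D$ is the integral Cartier divisor $\Div_A(f^{(p^{en}-1)/(p^e-1)})$ and $A$ is regular), and then apply Fedder's characterization of $\m^{[p^{en}]}$ via the trace map. The paper simply cites \cite[Proposition 3.3]{Sch-sh} for the first step and \cite[Lemma 1.6]{Fed} for the last, whereas you unwind those citations by giving the iteration-of-splittings argument (with the correct ceiling inequality) and the Cartier-operator dictionary explicitly.
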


\begin{proof}
By the proof of \cite[Proposition 3.3]{Sch-sh}, the triple $(A, \D, \fa^t)$ is sharply $F$-pure if and only if there exists an integer $n > 0$ and $\phi \in \Hom_R(F^{en}_* A( (p^{en}-1)\D), A)$ such that $\phi( F^{en}_* \fa^{\age{t(p^{en}-1)}})=A$.
Since 
\[
(p^{en}-1)\D= \Div_A(f^{\frac{p^{en}-1}{p-1}}),
\]
the assertion follows from \cite[Lemma 1.6]{Fed}.
\end{proof}

Suppose that $X$ is an $F$-finite Noetherian normal connected scheme, $\D$ is an effective $\Q$-Weil divisor on $X$, $\fa \subseteq \sO_X$ is a coherent ideal sheaf and $t \ge 0$ is a real number.
For any point $x \in X$, we denote by $\D_x$ the flat pullback of $\D$ to $\Spec \sO_{X,x}$.

\begin{defn}
With the notation above, we say that $(X, \D, \fa^t)$ is \emph{sharply $F$-pure} if $(\sO_{X,x}, \D_x, \fa_x^t)$ is sharply $F$-pure for every point $x \in X$.
\end{defn}

\begin{rem}
Suppose that $X=\Spec R$ is an affine scheme.
Then, the above definition differs from the one given in \cite{Sch-sh}.
See \cite{Sch-ref}.
\end{rem}

\begin{defn}\label{fpt def}
With the notation above, assume that $(X, \D)$ is sharply $F$-pure.
We define the \emph{$F$-pure threshold} of $(X,\D; \fa)$ by
\[
\lfpt(X, \D; \fa) := \inf\left\{ t \ge 0 \mid (X, \D, \fa^t) \textup{ is not sharply $F$-pure} \right\} \in \R_{\ge 0} \cup \{ \infty \}.
\]
When $X= \Spec R$ is an affine scheme, we denote it by $\lfpt(R, \D; \fa)$.
\end{defn}

\begin{lem}\label{fpt min}
With the notation above, we assume that $(X, \D)$ is sharply $F$-pure.
Then, $\lfpt(X,\D;\fa) = \min\left\{ \fpt(\sO_{X,x}, \D_x; \fa_x) \mid x \in X \right\}$.
\end{lem}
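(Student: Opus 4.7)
The plan is to first note that
$$\lfpt(X, \D; \fa) = \inf\{c_x \mid x \in X\}, \quad c_x := \fpt(\sO_{X,x}, \D_x; \fa_x),$$
as a formal consequence of the definition, and then upgrade this infimum to a minimum. Write $c := \lfpt(X, \D; \fa)$. By the very definition of sharp $F$-purity on $X$, the set $\{t \ge 0 \mid (X, \D, \fa^t) \text{ is not sharply $F$-pure}\}$ is the union over $x \in X$ of the corresponding sets defined at each stalk, so taking infima yields $c = \inf_x c_x$ immediately.

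To attain this infimum I may assume $c < \infty$, since if $c = \infty$ then every $c_x = \infty$ and the minimum is trivially attained. For each $t \ge 0$ I introduce the non-sharply-$F$-pure locus
$$Z_t := \{ x \in X \mid (\sO_{X,x}, \D_x, \fa_x^t) \text{ is not sharply $F$-pure} \}.$$
The crucial step is that $Z_t$ is closed in $X$. This is a local assertion, so working on an affine open $U = \Spec R$ I would form the ideal
$$I_t := \sum_{e, \phi} \phi\bigl(F^e_* \fa^{\age{t(p^e-1)}}\bigr) \subseteq R,$$
with $e$ ranging over positive integers and $\phi$ over $\Hom_R(F^e_* R(\age{(p^e-1)\D}), R)$. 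Using that $F^e_* R(\age{(p^e-1)\D})$ is finitely generated (by $F$-finiteness of $R$), $\Hom$ commutes with localization at a prime $\p$, so $(R_\p, \D_\p, \fa_\p^t)$ is sharply $F$-pure if and only if $I_t \not\subseteq \p$; this gives $Z_t \cap U = V(I_t)$, hence closedness.

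Lemma \ref{F-sing basic}(2) supplies the monotonicity $Z_t \subseteq Z_{t'}$ whenever $t \le t'$. Because $c + 1/n > c$, the triple $(X, \D, \fa^{c+1/n})$ fails sharp $F$-purity for every $n \ge 1$, so each $Z_{c+1/n}$ is non-empty. The descending chain $Z_{c+1} \supseteq Z_{c+1/2} \supseteq \cdots$ of closed subsets of the Noetherian topological space $X$ must stabilize at some $Z_{c + 1/N}$, which is therefore still non-empty. Any point $x_0 \in Z_{c+1/N} = \bigcap_n Z_{c + 1/n}$ then satisfies $c_{x_0} \le c + 1/n$ for every $n$, whence $c_{x_0} \le c$; combined with $c = \inf_y c_y \le c_{x_0}$ we conclude $c_{x_0} = c$, so the minimum is attained.

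The main obstacle is establishing the closedness of $Z_t$, which rests on the ideal-theoretic description of sharp $F$-purity via $I_t$ together with the compatibility of $\Hom$ with localization afforded by $F$-finiteness. Once this is in hand, everything else is routine: set-theoretic identities, monotonicity in $t$, and the descending chain condition on closed subsets of the Noetherian scheme $X$.
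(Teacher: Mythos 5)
Your proposal is correct and takes essentially the same route as the paper: both define the closed locus $Z_t = V(I_t)$ via the ideal $I_t := \sum_{e,\phi} \phi(F^e_* \fa^{\age{t(p^e-1)}})$, observe that the family $\{Z_t\}$ is a descending chain of closed subsets as $t$ decreases toward $c = \fpt(X,\D;\fa)$, invoke the Noetherian property to stabilize it just above $c$, and pick a point in the stable nonempty locus. Your version is a bit more explicit about the infimum identity $c = \inf_x c_x$ and the localization of $\Hom$ needed for closedness, but mathematically it is the paper's proof.
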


\begin{proof}
We may assume that $X=\Spec R$.
For every $t \ge 0$, we consider $I_t : = \sum_{e, \phi} \phi( F^e_* \fa^{\age{t (p^e-1)}}) \subseteq R$ as in the proof of Lemma \ref{F-sing basic} (5).
Then, the set 
\[
 Z_t : = \{ x \in X \mid (\sO_{X,x}, \D_x, \fa_x^t) \textup{ is not sharply $F$-pure} \} \subseteq X\] 
is a closed set defined by the ideal $I_t$.
Since $R$ is Noetherian, there exists a real number $\epsilon>0$ such that $Z_t$ is constant for all $\fpt(X,\D; \fa)< t < \fpt(X, \D; \fa) +\epsilon$.
Take a point $x \in Z_t$ for such $t$.
Then we have $\fpt(X, \D;\fa)= \fpt(\sO_{X,x}, \D_x; \fa_x)$, which completes the proof.
\end{proof}

\begin{prop}[\textup{\cite[Theorem 5.5]{Sch-adj}}]\label{F-adj}
Suppose that $A$ is an $F$-finite regular local ring, $R=A/I$ is a normal ring and $\D_R$ is an effective $\Q$-Weil divisor on $\Spec R$.
Assume that the pair $(R,\D_R)$ is sharply $F$-pure and there exists an integer $e>0$ such that $(p^e-1)(K_X+\D_R)$ is Cartier.
Then, there exists an effective $\Q$-Weil divisor $\D_A$ on $\Spec A$ with the following properties:
\begin{enumerate}
\item $(p^e-1)\D_A$ is Cartier, and
\item Suppose that $\fa \subseteq R$ is an ideal and $\tilde{\fa} \subseteq A$ is the lift of $\fa$.
Then we have $\fpt(R, \D_R; \fa) = \fpt(A, \D_A; \tilde{\fa})$. 
\end{enumerate}
\end{prop}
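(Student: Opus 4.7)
The plan is to produce $\D_A$ as the divisor of a lifted Frobenius generator, and then show that the sharp $F$-purities on the two sides are governed by the same Fedder-type inequality. Since $A$ is $F$-finite regular, $\Hom_A(F^e_* A, A)$ is free of rank one over $F^e_* A$; fix a generator $\Psi_A$. Grothendieck duality for the closed immersion $\Spec R \hookrightarrow \Spec A$ identifies
\[
\Hom_R(F^e_* R, R) \;\cong\; F^e_*\bigl((I^{[p^e]}:I)/I^{[p^e]}\bigr),
\]
sending $\bar g$ to the map $\phi_g : F^e_* \bar r \mapsto \overline{\Psi_A(F^e_*(g\tilde r))}$. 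The hypothesis that $(p^e-1)(K_X+\D_R)$ is Cartier makes $\Hom_R(F^e_* R((p^e-1)\D_R), R)$ an invertible $F^e_* R$-module; since $R$ is local, pick a generator $\phi$ with corresponding lift $g \in (I^{[p^e]}:I)$. Set $\D_A := \Div_A(g)/(p^e-1)$. Then $(p^e-1)\D_A = \Div_A(g)$ is Cartier, and $\D_A$ is effective because $g \in A$, giving~(1).

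For~(2), the key technical point is that the $n$-fold iterate $\phi^n$, which lies in $\Hom_R(F^{en}_* R((p^{en}-1)\D_R), R)$, is again a generator and corresponds, under the analogous duality isomorphism at level $en$, to the class of $g^{(p^{en}-1)/(p^e-1)}$ modulo $I^{[p^{en}]}$; in particular, that element generates $(I^{[p^{en}]}:I)$ modulo $I^{[p^{en}]}$. Consequently, sharp $F$-purity of $(R, \D_R, \fa^t)$—the existence of some $n$ with $\phi^n(F^{en}_* \fa^{\age{t(p^{en}-1)}}) = R$—is equivalent to
\[
g^{\frac{p^{en}-1}{p^e-1}}\, \tilde{\fa}^{\age{t(p^{en}-1)}} \;\not\subseteq\; \m_A^{[p^{en}]}
\]
for some $n \ge 1$, using the standard fact that $\Psi_A^n(F^{en}_* h) \not\in \m_A$ iff $h \not\in \m_A^{[p^{en}]}$. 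But this is precisely Lemma~\ref{Fedder} applied to $(A, \D_A, \tilde{\fa}^t)$, since $(p^e-1)\D_A = \Div_A(g)$. Hence both triples are simultaneously sharply $F$-pure for every $t \ge 0$, and therefore $\fpt(R, \D_R; \fa) = \fpt(A, \D_A; \tilde\fa)$.

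The main obstacle will be verifying the compatibility between iteration of $\phi$ in the Frobenius-algebra sense and raising $g$ to the exponent $(p^{en}-1)/(p^e-1) = 1 + p^e + \cdots + p^{e(n-1)}$. By induction on $n$, this reduces to the projection-formula identity $\Psi_A \circ F^e_*(g \cdot \Psi_A(F^e_*(-))) = \Psi_A^2(F^{2e}_*(g^{1+p^e} \cdot -))$, which is routine once one has set up the Cartier-algebra formalism of Schwede--Tucker. A secondary verification is that the surjectivity criterion passes through the quotient correctly: one needs $h \cdot (I^{[p^{en}]}:I) \not\subseteq \m_A^{[p^{en}]}$ to reduce to $h \cdot g^{(p^{en}-1)/(p^e-1)} \not\subseteq \m_A^{[p^{en}]}$, which follows from the generation statement above together with the trivial containment $I^{[p^{en}]} \subseteq \m_A^{[p^{en}]}$.
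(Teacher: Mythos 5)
The paper does not supply a proof of this proposition; it is cited verbatim from \cite[Theorem 5.5]{Sch-adj}, so there is no in-house argument to compare against. Your sketch is a faithful reconstruction of Schwede's route: lift the generator of the rank-one module $\Hom_R(F^e_* R((p^e-1)\D_R), R)$ through Fedder duality to an element $g \in (I^{[p^e]}:I)$, set $\D_A := \Div_A(g)/(p^e-1)$, and match the Fedder-type criteria for $R$ and $A$ level by level via the compatibility of $n$-fold iteration of $\phi$ with raising $g$ to the exponent $(p^{en}-1)/(p^e-1)$. That is the correct approach, and the two remaining verifications you flag (the projection-formula identity and the replacement of arbitrary witnessing levels $e'$ by multiples of $e$ via \cite[Proposition~3.3]{Sch-sh}) are exactly the places the paper's own Lemma~\ref{Fedder} and Schwede's Cartier-algebra formalism cover.

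One intermediate assertion is false as written. You claim that the class of $g^{(p^{en}-1)/(p^e-1)}$ ``generates $(I^{[p^{en}]}:I)$ modulo $I^{[p^{en}]}$.'' That quotient is dual to $\Hom_R(F^{en}_* R, R)$, which is cyclic over $F^{en}_* R$ only when $K_X$ (not $K_X+\D_R$) has Cartier index dividing $p^{en}-1$ --- in particular, it fails whenever $R$ is non-Gorenstein. What is true, and what your argument actually uses two sentences later, is that $g^{(p^{en}-1)/(p^e-1)}$ generates the \emph{sub}module of $(I^{[p^{en}]}:I)/I^{[p^{en}]}$ corresponding to the twisted Hom module $\Hom_R(F^{en}_* R((p^{en}-1)\D_R), R)$; this is precisely where the hypothesis that $(p^e-1)(K_X+\D_R)$ is Cartier enters, making that twisted module free of rank one and the Cartier algebra principally generated in degree $e$. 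Since the sharp $F$-purity test quantifies over the twisted Hom module and not the full one, the conclusion is unaffected: replace the ``in particular'' clause by the submodule statement and the secondary verification in your final paragraph goes through, with the containment $g^{(p^{en}-1)/(p^e-1)}\cdot I \subseteq I^{[p^{en}]} \subseteq \m^{[p^{en}]}$ (obtained inductively from $g \cdot I \subseteq I^{[p^e]}$) doing the work of absorbing the ambiguity in the lift $\tilde\fa$.
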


\subsection{Test ideals and parameter test modules}
In this subsection, we recall the definitions and basic properties of test ideals and parameter test modules.

\begin{defn}\label{test def}
Suppose that $R$ is an $F$-finite Noetherian normal domain, $\D$ is an effective $\Q$-Weil divisor on $X=\Spec R$, $\fa, \fb \subseteq R$ are non-zero ideals and $t,s \ge 0$ are real numbers.
The \emph{test ideal} $\tau(R, \D, \fa^t \fb^s)$ (resp. the \emph{parameter test module} $\tau(\omega_X, \D, \fa^t \fb^s)$) is the unique smallest non-zero ideal $J \subseteq R$ (resp. non-zero submodule $J \subseteq \omega_X$) such that 
\[
\phi( F^e_* (\fa^{ \age{t( p^e-1)}}\fb^{ \age{s( p^e-1)}} J)) \subseteq J
\]
for every integer $e \ge 0$ and every morphism $\phi \in \Hom_R(F^e_* R (\age{(p^e-1)\D}), R)$ (resp. $\phi \in \Hom_R(F^e_* \omega_X (\age{(p^e-1)\D}), \omega_X)$).
\end{defn}

The test ideal and the parameter test module always exist (\cite[Theorem 6.3]{Sch-cent} and \cite[Lemma 4.2]{ST14}).
If $\fb=R$, then we write $\tau(R, \D, \fa^t)$ (resp. $\tau(\omega_X, \D, \fa^t)$).
If $\fa=\fb=R$, then we write $\tau(R, \D)$ (resp. $\tau(\omega_X, \D)$).
If $\fa=0$, then we define $\tau(\omega_X, \D, \fa^t)=\tau(R,\D,\fa^t) : =(0)$.

\begin{lem}\label{test basic}
With the notation above, we assume that $\D$ is $\Q$-Cartier.
Then the following hold.
\begin{enumerate}
\item If $t \le t'$ and $\D \le \D'$, then $\tau(\omega_X, \D', \fa^{t'}) \subseteq \tau(\omega_X, \D , \fa^t )$.
\item \textup{(\cite[Lemma 6.1]{ST14})} There exists a real number $\epsilon>0$ such that if $t \le t' \le t+ \epsilon$, then $\tau(\omega_X, \D, \fa^{t'} )= \tau(\omega_X, \D , \fa^t)$.
\item \textup{(\cite[Lemma 6.2]{ST14})} There exists a real number $\epsilon>0$ such that if $t-\epsilon \le t' < t$, then $\tau(\omega_X, \D, \fa^{t'} )= \tau(\omega_X , \D , \fa^{t-\epsilon})$.
\item \textup{(\cite[Lemma 4.4]{ST14})} Suppose that $\Tr_R :F_* \omega_X \to \omega_X$ is the Grothendieck trace map \textup{(\cite[Proposition 2.18]{BST})}. 
Then we have 
\[
\Tr_R ( F_* \tau( \omega_X, \D, \fa^t) ) = \tau(\omega_X, \D/p, \fa^{t/p}).
\]
\item \textup{(\cite[Theorem 4.2]{HT}, cf. \cite[Lemma 3.26]{BSTZ})} If $\fa$ is generated by $l$ elements and $l \le t$, then $\tau(\omega_X, \D, \fa^t \fb^s)= \fa \tau(\omega_X, \D, \fa^{t-1} \fb^s)$.
\item \textup{(\cite[Lemma 3.1]{Sch-note})} If $\fb= (f)$ is a non-zero principal ideal, then we have $\tau(\omega_X, \D, \fa^t \fb^s)= \tau(\omega_X, \D+ s \Div(f), \fa^t)$.
\item For an integer $r \ge 1$, we have $\tau(\omega_X, \D, \fa^{rt})= \tau(\omega_X, \D, (\fa^r)^t)$.
\end{enumerate}
\end{lem}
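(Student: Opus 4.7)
My plan is to handle the items of Lemma \ref{test basic} according to whether they are original or quoted. Items (2)--(6) are explicitly attributed in the statement, so it suffices to verify that the hypotheses of the cited results translate to our setting ($R$ normal $F$-finite Noetherian, $\D$ effective $\Q$-Cartier, and the stated conditions on the ideal); in each case the translation is essentially mechanical, so I would simply invoke the reference. The substantive items are (1) and (7), both of which rest on the minimality built into the definition of the parameter test module.

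For (1), I argue directly from the universal property. Assume $t \le t'$ and $\D \le \D'$, so $\age{t(p^e-1)} \le \age{t'(p^e-1)}$ and $\age{(p^e-1)\D} \le \age{(p^e-1)\D'}$ for every $e \ge 1$. Hence $\fa^{\age{t'(p^e-1)}} \subseteq \fa^{\age{t(p^e-1)}}$, and every $\phi \in \Hom_R(F^e_* \omega_X(\age{(p^e-1)\D'}), \omega_X)$ restricts to a morphism $\phi' \in \Hom_R(F^e_* \omega_X(\age{(p^e-1)\D}), \omega_X)$. Writing $J := \tau(\omega_X, \D, \fa^t)$ and applying its defining closure to $\phi'$, we check
\[
\phi(F^e_*(\fa^{\age{t'(p^e-1)}} J)) = \phi'(F^e_*(\fa^{\age{t'(p^e-1)}} J)) \subseteq \phi'(F^e_*(\fa^{\age{t(p^e-1)}} J)) \subseteq J.
\]
Thus $J$ satisfies the closure that defines $\tau(\omega_X, \D', \fa^{t'})$, and minimality forces the inclusion.

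For (7), the inclusion $\tau(\omega_X, \D, (\fa^r)^t) \subseteq \tau(\omega_X, \D, \fa^{rt})$ is immediate: since $r\age{t(p^e-1)}$ is an integer no smaller than $\age{rt(p^e-1)}$, we have $\fa^{r\age{t(p^e-1)}} \subseteq \fa^{\age{rt(p^e-1)}}$, so the closure defining $\tau(\omega_X, \D, \fa^{rt})$ is stronger than that defining $\tau(\omega_X, \D, (\fa^r)^t)$, and the inclusion follows by minimality. For the reverse inclusion, note that the excess $r\age{t(p^e-1)} - \age{rt(p^e-1)}$ is bounded by $r-1$ uniformly in $e$, and hence becomes relatively negligible at high Frobenius levels. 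My plan is to use (2) to replace $t$ by a slightly larger rational $t'$, chosen with denominator coprime to $p$, so that both test modules at $t$ agree with those at $t'$; then pick $e$ with $t'(p^e-1) \in \Z$ (infinitely many such $e$ by Fermat), at which the two defining exponents $\age{rt'(p^e-1)}$ and $r\age{t'(p^e-1)}$ coincide exactly. The main obstacle is then to justify that the parameter test module can be computed by restricting to any cofinal family of Frobenius levels, which is a standard but not entirely trivial consequence of the universal property together with the closure of the defining $\Hom$-sets under pre-composition with Frobenius at a small extra level.
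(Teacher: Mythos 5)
Your (1) is correct and fills in the verification the paper leaves implicit. The substantive discrepancy is in (7). The paper's own proof of the lemma first reduces parameter test modules to test ideals via the identification $\tau(\omega_X,\D,\fa^t\fb^s)=\tau(R,\D-K_X,\fa^t\fb^s)$ (choosing $-K_X\ge 0$), cites the test-ideal statements for (5) and (6), and says that (7) is ``similar to the proof of (6).'' The proof behind (6) in [Sch-note, Lemma~3.1] is a test-element absorption, and the same device gives (7) directly: since $0\le r\age{t(p^e-1)}-\age{rt(p^e-1)}\le r-1$ uniformly in $e$, one writes $\tau(\omega_X,\D,\fa^{rt})=\sum_{e,\phi}\phi\bigl(F^e_*(c\,\fa^{\age{rt(p^e-1)}}\omega_X)\bigr)$ for a test element $c$, replaces $c$ by $cd$ for some nonzero $d\in\fa^{r-1}$, and reads off $\tau(\omega_X,\D,\fa^{rt})\subseteq\tau(\omega_X,\D,(\fa^r)^t)$; the reverse containment you already have. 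No perturbation of $t$ is needed, and the argument works for any real $t\ge 0$.

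The gap in your route to (7) is real and larger than you indicate. Imposing the stability condition only along a cofinal set $E$ of Frobenius levels is a \emph{weakening} of the definition, so formally it produces a possibly smaller module; the claim that it reproduces the full test module is not a consequence of the universal property, nor of ``pre-composing with a small extra Frobenius,'' which moves the level $e$ in the wrong direction for this purpose. The statement is only expected when $E$ is the set of multiples of a fixed $e_0$ chosen so that $(p^{e_0}-1)\D$ is Cartier, and proving even that case already requires essentially the same bounded-discrepancy absorption as the test-element argument. I suggest replacing the cofinal-family step by the direct absorption argument above, which is shorter, applies to real $t$ without rationalization, and is what the paper's ``similar to (6)'' actually points to.
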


\begin{proof}
Take a canonical divisor $K_X$ such that $-K_X$ is effective.
Then we have $\tau( \omega_X, \D, \fa^t \fb^s)= \tau(R, \D- K_X, \fa^t \fb^s)$ (\cite[Lemma 4.2]{ST14}).
Therefore, the assertions in (5) and (6) follow from the same assertions for test ideals.
The proof of (7) is similar to the proof of (6).
\end{proof}

\begin{rem}
Suppose that $X$ is an $F$-finite Noetherian normal connected scheme which has a canonical module $\omega_X$, $\fa, \fb \subseteq \sO_X$ are coherent ideals, and $t, s \ge 0$ are real numbers.
Since parameter test modules are compatible with localization (\cite[Proposition 3.1]{HT}), we can define the parameter test module $\tau(\omega_X, \D, \fa^t \fb^s) \subseteq \omega_X$.
\end{rem}

\begin{lem}\label{tau- vs Fpure}
Let $(A,\D, \fa^t)$ be a triple such that $A$ is a regular local ring.
\begin{enumerate}
\item If $(A, \D, \fa^t)$ is sharply $F$-pure, then for any rational numbers $0< \epsilon, \epsilon'<1$, the triple $(A, (1-\epsilon) \D, \fa^{t(1-\epsilon')})$ is strongly $F$-regular.
\item If $(p^e-1)\D$ is Cartier for an integer $e>0$ and $(A, (1-\epsilon) \D, \fa^t)$ is strongly $F$-regular for every $0<\epsilon<1$, then the triple $(A,\D, \fa^{t(1-\epsilon')})$ is sharply $F$-pure for every $0<\epsilon'<1$.
\end{enumerate} 

\end{lem}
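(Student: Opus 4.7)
The plan is to combine Fedder's criterion (Lemma \ref{Fedder}) with the hereditary properties of sharp $F$-purity and strong $F$-regularity collected in Lemma \ref{F-sing basic}.

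For part (1), since $(1-\epsilon)\D \le \D$, Lemma \ref{F-sing basic}(2) immediately gives sharp $F$-purity of $(A, (1-\epsilon)\D, \fa^t)$. Upgrading this to strong $F$-regularity of $(A, (1-\epsilon)\D, \fa^{t(1-\epsilon')})$, via Lemma \ref{F-sing basic}(3) and the inequality $t(1-\epsilon') < t$, reduces to showing that $(A, (1-\epsilon)\D)$ is itself strongly $F$-regular. I would establish this by iterating a splitting $\phi_0 \in \Hom_A(F^{e_0}_*A(\age{(p^{e_0}-1)\D}), A)$ coming from the sharp $F$-purity of $(A, \D)$: the iterate $\phi_0^n$ lies in $\Hom_A(F^{ne_0}_*A(D_n), A)$ with $D_n$ comparable to $(p^{ne_0}-1)\D$, and the surplus between $\age{(p^{ne_0}-1)\D}$ and $\age{(p^{ne_0}-1)(1-\epsilon)\D}$ (which grows like $\epsilon(p^{ne_0}-1)\D$) provides enough room, via multiplication by a suitable element, to produce for each prescribed $c\in A \setminus \{0\}$ a splitting in $\Hom_A(F^{ne_0}_*A(\age{(p^{ne_0}-1)(1-\epsilon)\D}), A)$ sending $c$ to a unit.

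For part (2), I would invoke the Cartier hypothesis to write $(p^e-1)\D = \Div_A(f)$ for some $f \in A$, so that $\age{(p^{en}-1)\D} = \Div_A(f^{(p^{en}-1)/(p^e-1)})$ exactly. Fedder's criterion then recasts sharp $F$-purity of $(A, \D, \fa^{t(1-\epsilon')})$ as the existence of $n > 0$ with
\[
f^{(p^{en}-1)/(p^e-1)}\fa^{\age{t(1-\epsilon')(p^{en}-1)}} \not\subseteq \m^{[p^{en}]},
\]
and translates the strong $F$-regularity of $(A, (1-\epsilon)\D, \fa^t)$ into the statement that, for every nonzero $c \in A$, some $n$ satisfies $cf^{\age{(1-\epsilon)(p^{en}-1)/(p^e-1)}}\fa^{\age{t(p^{en}-1)}} \not\subseteq \m^{[p^{en}]}$. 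Applying this with $c = f^k$ and using the inclusion
\[
f^{k+\age{(1-\epsilon)(p^{en}-1)/(p^e-1)}}\fa^{\age{t(p^{en}-1)}} \subseteq f^{(p^{en}-1)/(p^e-1)}\fa^{\age{t(1-\epsilon')(p^{en}-1)}},
\]
valid as soon as $k \ge \epsilon(p^{en}-1)/(p^e-1)$, would transfer the hypothesized non-containment to the target one.

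The principal obstacle is the quantifier alignment in part (2): strong $F$-regularity produces $n$ only once $(c, \epsilon)$ are fixed, whereas the absorption inequality $k \ge \epsilon(p^{en}-1)/(p^e-1)$ couples $\epsilon$ and $n$ in opposite directions. I expect to resolve this by exploiting the hypothesis for \emph{every} $\epsilon > 0$: after fixing $\epsilon'$ and some integer $k \ge 1$, one examines the candidate levels $n(\epsilon, f^k)$ as $\epsilon \downarrow 0$ and extracts a configuration $(\epsilon, n)$ for which $\epsilon(p^{en}-1)/(p^e-1) \le k$ while the hypothesis non-containment still holds, possibly by combining an iteration of Frobenius splittings on the hypothesis side with a careful comparison of ceiling values. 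Managing this bookkeeping---and in particular verifying that a sufficiently small level $n$ is available---is where I anticipate the bulk of the technical work.
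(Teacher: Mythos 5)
Your argument for part~(1) is essentially the paper's: the paper also reduces to strong $F$-regularity of $(A,(1-\epsilon)\D)$ and then invokes Lemma~\ref{F-sing basic}~(2),(3), using that $A$ itself is strongly $F$-regular. You supply the iteration-of-splittings reasoning more explicitly than the paper, but that is what the cited lemma items encapsulate, so part~(1) is fine.

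Part~(2) has a genuine gap, and you have correctly located it but not closed it. Translating strong $F$-regularity of $(A,(1-\epsilon)\D,\fa^t)$ through Fedder/Glassbrenner only yields, for each fixed pair $(c,\epsilon)$, the existence of \emph{some} level $n$ with $cf^{\age{(1-\epsilon)(p^{en}-1)/(p^e-1)}}\fa^{\age{t(p^{en}-1)}}\not\subseteq\m^{[p^{en}]}$, whereas your absorption step requires $k\ge\epsilon(p^{en}-1)/(p^e-1)$, i.e.\ $n$ bounded above in terms of $\log(k/\epsilon)$. Nothing in the hypothesis controls the level $n(\epsilon,f^k)$: as $\epsilon\downarrow 0$ it could grow faster than $\log_{p^e}(1/\epsilon)$, and iterating Frobenius splittings only pushes $n$ \emph{upward}, not downward, so it cannot repair the mismatch. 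Your sketched fix ("extract a configuration $(\epsilon,n)$ ...") does not identify a mechanism that would produce a small enough $n$, and I do not see a purely Fedder-theoretic way to obtain one.

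The paper resolves this with a qualitatively different tool. Taking $\epsilon=1/p^{en}$ and using the identification of test ideals with parameter test modules on a regular ring, it applies the trace formula $\Tr_R(F_*\tau(\omega_X,\D,\fa^t))=\tau(\omega_X,\D/p,\fa^{t/p})$ (Lemma~\ref{test basic}~(4)) together with the Skoda-type formula (5) and the unwinding formula (6). This rewrites the triviality $\tau\bigl(A,\tfrac{p^{en}-1}{p^{en}}\D,\fa^t\bigr)=A$ as the exact level-$n$ statement $\phi^n\bigl(F^{en}_*\fa^{\age{(t-a_n)(p^{en}-1)}}\bigr)=A$ with $\phi=\Tr^e_A(F^e_*(f\cdot{-}))$, which is precisely a sharp $F$-purity witness at level $n$ for the exponent $t-a_n$, with $a_n\to 0$ explicitly. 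In other words, the trace formula \emph{pegs} the witnessing level to $n$, circumventing the quantifier issue altogether; this is the missing input in your approach. If you want to stay with Fedder's criterion you would effectively need to prove a comparable "one step of Frobenius descent preserves triviality at a specified level" statement, and that is exactly what Lemma~\ref{test basic}~(4) packages.
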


\begin{proof}
For (1), we assume that the triple $(A, \D, \fa^t)$ is sharply $F$-pure.
Since $A$ is strongly $F$-regular (\cite{HH}), it follows from Lemma \ref{F-sing basic} (2) and (3) that $(A, (1-\epsilon) \D)$ is strongly $F$-regular for every $0<\epsilon < 1$.
Then applying Lemma \ref{F-sing basic} (2) and (3) again, we see that $(A, (1-\epsilon)\D, \fa^{t(1-\epsilon')})$ is strongly $F$-regular for every $0<\epsilon, \epsilon' <1$.

For (2), set $q := p^e$ and suppose that $\D= \Div(f)/(q-1)$ for some non-zero element $f \in A$.
Take an integer $l >t$ such that $\fa$ is generated by at most $l$ elements and set $a_n : = (l-t)/ (q^n-1)$ for every integer $n \ge 0$.
Since for any triple, it is strongly $F$-regular if and only if the test ideal is trivial (\cite[Corollary 2.10]{Tak}, see also \cite[Corollary 4.6]{Sch-cent}), we have $\tau(A, ((q^n-1)/q^n) \D /, \fa^t) = A$ for every integer $n \ge 0$.

Since $A$ is regular local, we may identify test ideals on $A$ with parameter test modules.
Set $\phi : = \Tr^e_A ( F^e_*(f \cdot - )) \in \Hom_A(F^e_* A( (q-1) \D, A))$.
Then it follows from Lemma \ref{test basic} (4), (5) and (6) that
\begin{eqnarray*}
A &=& \tau(A, ((q^n-1)/q^n) \D , \fa^t)\\
 &=& \Tr^{en}_A ( F^{en}_* \tau(A, f^{(q^n-1)/(q-1)} \fa^{t q^n})) \\
&=& \phi^n( F^{en}_* \tau(A, \fa^{t q^n})) \\
&\subseteq & \phi^n( F^{en}_* \fa^{\age{t q^n -l}} )\\
&\subseteq & \phi^n( F^{en}_* \fa^{\age{(t-a_n) (q^n-1)}} ),
\end{eqnarray*}
which proves that $(A, \D, \fa^{t-a_n})$ is sharply $F$-pure for every integer $n \ge 0$.
Since $\lim_{n \to \infty} a_n=0$, the triple $(A, \D, \fa^{t(1- \epsilon')})$ is sharply $F$-pure for every $0<\epsilon' <1$.
\end{proof}

\subsection{Ultraproduct}
In this subsection, we define the catapower of a Noetherian local ring and recall some properties.

\begin{defn}
Let $\U$ be a collection of subsets of $\N$.
$\U$ is called an \emph{ultrafilter} if the following properties hold:
\begin{enumerate}
\item $\emptyset \not\in \U$.
\item For every subsets $A, B \subseteq \N$, if $A \in \U$ and $A \subseteq B$, then $B \in \U$.
\item For every subsets $A, B \subseteq \N$, if $A, B \in \U$, then $A \cap B \in \U$.
\item For every subset $A \subseteq \N$, if $A \not\in \U$, then $\N \setminus A \in \U$.
\end{enumerate}

An ultrafilter $\U$ is called \emph{non-principal} if the following holds:
\begin{enumerate}
\setcounter{enumi}{4}
\item If $A$ is a finite subset of $\N$, then $A \not\in \U$.
\end{enumerate}
\end{defn}

By Zorn's Lemma, there exists a non-principal ultrafilter.
From now on, we fix a non-principal ultrafilter $\U$.

\begin{defn}
Let $T$ be a set.
We define the equivalence relation $\sim$ on the set $T^{\N}$ by
\[
(a_m)_m \sim (b_m)_m \textup{ if and only if } 
\left\{ m \in \N \mid a_m=b_m \right\} \in \U.
\]
We define the \emph{ultrapower} of $T$ as
\[
\ultra{T} : = T^{\N}/ \sim.
\]
\end{defn}

The class of $( a_m )_m \in T^{\N}$ is denoted by $\ulim_m a_m$.
If $T$ is a ring (resp. local ring, field), then so is $\ultra{T}$.
Moreover, if $T$ is an $F$-finite field of characteristic $p>0$, then so is $\ultra{T}$. (see \cite[Proposition 2.14]{Sat}).

\begin{defn}[\textup{\cite{Scho}}]
Suppose that $(R, \m)$ is a Noetherian local ring and $( \ultra{R}, \ultra{\m})$ is the ultrapower.
We define the \emph{catapower} $\cata{R}$ as the quotient ring
\[
\cata{R} : = \ultra{R}/ (\cap_{n} (\ultra{\m})^n).
\]
\end{defn}

\begin{prop}[\textup{\cite[Theorem 8.1.19]{Scho}}]
Suppose that $(R, \m, k)$ is a Noetherian local ring of equicharacteristic and $\widehat{R}$ is the $\m$-adic completion of $R$.
We fix a coefficient field $k \subseteq \widehat{R}$.
Then we have 
\[
\cata{R} \cong \widehat{R} \ \widehat{\otimes}_k (\ultra{k}).
\]
In particular, if $(R,\m)$ is an $F$-finite regular local ring, then so is $\cata{R}$.
\end{prop}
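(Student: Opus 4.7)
The plan is to establish the isomorphism through three reductions. First I would show that the natural map $R \to \widehat{R}$ induces an isomorphism $\cata{R} \xrightarrow{\sim} \cata{\widehat{R}}$: injectivity follows from $\widehat{\m}^n \cap R = \m^n$, and surjectivity from approximating each $\hat{r}_m \in \widehat{R}$ by $r_m \in R$ with $\hat{r}_m - r_m \in \widehat{\m}^m$, using that every cofinite subset of $\N$ belongs to the non-principal ultrafilter $\U$. Thus I may assume $R$ is complete, and by the Cohen structure theorem write $R = S/I$ with $S = k[[x_1, \ldots, x_n]]$.

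Next I would compute $\cata{S}$ directly. I would define a ring homomorphism
\[
\Phi \colon \ultra{S} \to \ultra{k}[[x_1, \ldots, x_n]], \qquad (f_m)_m \longmapsto \sum_\alpha \bigl(\ulim_m c_{m,\alpha}\bigr) x^\alpha,
\]
where $f_m = \sum_\alpha c_{m,\alpha} x^\alpha$. This is multiplicative because each coefficient of a product of power series depends polynomially on finitely many coefficients of the factors, so $\ulim$ commutes with the operation. The kernel of $\Phi$ equals $\bigcap_n \ultra{\m_S}^n$ because membership in $\m_S^n$ is the finite conjunction of vanishing conditions $c_{m,\alpha} = 0$ for $|\alpha| < n$. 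Surjectivity follows from a diagonal choice: given $F = \sum_\alpha d_\alpha x^\alpha$ with $d_\alpha = \ulim_m c_{m,\alpha}$, take $f_m := \sum_{|\alpha| \le m} c_{m,\alpha} x^\alpha$, and note that $\{m \ge |\alpha|\} \in \U$ for each $\alpha$.

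The remaining step is to descend to the quotient: $\cata{S/I} \cong \cata{S}/I\cata{S}$. Since $I$ is finitely generated, writing each $(b_m) \in \ultra{I}$ as an $S$-linear combination of generators of $I$ gives $\ultra{I} = I\ultra{S}$, so $\ultra{S/I} = \ultra{S}/I\ultra{S}$. It then remains to verify
\[
\bigcap_n \bigl(I\ultra{S} + \ultra{\m_S}^n\bigr) = I\ultra{S} + \bigcap_n \ultra{\m_S}^n.
\]
Modulo $\bigcap_n \ultra{\m_S}^n$ this says that $I\cata{S}$ is closed in the $\m$-adic topology on $\cata{S} = \ultra{k}[[x]]$, which is immediate since the latter is a Noetherian complete local ring. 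I expect this quotient step to be the main obstacle, because there is a mild logical dependency on the previous step (we need to know that $\cata{S}$ is Noetherian), which has to be untangled by computing the power series case first.

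Combining the reductions gives $\cata{R} \cong \ultra{k}[[x]]/I\ultra{k}[[x]] = (S/I) \widehat{\otimes}_k \ultra{k} = \widehat{R} \widehat{\otimes}_k \ultra{k}$. The ``in particular'' statement then follows at once: if $R$ is $F$-finite regular local, then $\widehat{R} = k[[x_1, \ldots, x_n]]$ with $k$ $F$-finite, $\ultra{k}$ is $F$-finite by the earlier cited fact, and $\cata{R} \cong \ultra{k}[[x_1, \ldots, x_n]]$ is therefore a regular local ring that is also $F$-finite.
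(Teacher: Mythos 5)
The paper does not prove this proposition at all: it is quoted directly from Schoutens \cite[Theorem 8.1.19]{Scho} and used as a black box. Your proposal therefore goes strictly beyond what the paper does, supplying a self-contained proof. The argument is correct. The reduction $\cata{R} \xrightarrow{\sim} \cata{\widehat{R}}$ works exactly as you say, using faithful flatness of $R \to \widehat{R}$ (so $\widehat{\m}^n \cap R = \m^n$) for injectivity and a diagonal approximation together with the non-principality of $\U$ for surjectivity. The power series computation is the heart: the map $\Phi$ is well defined and multiplicative by the finite-dependence of coefficients, the kernel is identified with $\bigcap_n (\ultra{\m_S})^n$ via the finite-conjunction description of membership in $\m_S^n$ (which is where the ultrafilter's closure under finite intersections is crucial), and the diagonal choice $f_m = \sum_{|\alpha|\le m} c_{m,\alpha} x^\alpha$ gives surjectivity. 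Passing to the quotient requires $\ultra{I}=I\ultra{S}$, which does hold because $I$ is finitely generated, and the equality $\bigcap_n(\ultra{\m_S}^n + I\ultra{S}) = I\ultra{S} + \bigcap_n\ultra{\m_S}^n$ does reduce, modulo $\bigcap_n\ultra{\m_S}^n$, to $I\cata{S}$ being $\m$-adically closed in $\cata{S}\cong\ultra{k}[[x]]$; since the latter is Noetherian complete local, this is Krull's intersection theorem. You correctly flag the logical order: the Noetherianity of $\cata{S}$ needed in step 3 is established in step 2, so the power series case must come first. The final identification with $\widehat{R}\,\widehat{\otimes}_k \ultra{k}$ and the $F$-finiteness in the ``in particular'' (via \cite[Proposition 2.14]{Sat}, which the paper cites just before this proposition) are both fine. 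This is essentially the standard proof one would find in Schoutens' book, so the route is not genuinely different — the difference is only that you actually carry it out while the paper cites it.
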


Suppose that $(R, \m)$ is a Noetherian local ring, $\cata{R}$ is the catapower and $a_m \in R$ for every $m$.
We denote by $\catae{a_m} \in \cata{R}$ the image of $(a_m)_m \in R^{\N}$ by the natural projection $R^{\N} \to \cata{R}$.
Let $\fa_m \subseteq R$ be an ideal for every $m \in \N$.
We denote by $[ \fa_m]_m \subseteq \cata{R}$ the image of the ideal $ \prod_m \fa_m \subseteq R^{\N}$ by the projection $R^{\N} \to \cata{R}$.

\begin{propdef}[\textup{\cite[Theorem 5.6.1]{Gol}}]
Let $\{a_m \}_{m \in \N}$ be a sequence of real numbers such that there exist real numbers $M_1, M_2$ which satisfies $M_1<a_m<M_2$ for every $m \in \N$.
Then there exists an unique real number $w \in \R$ such that for every real number $\epsilon >0$, we have
\[
\{ m \in \N \mid |w-a_m| <\epsilon \} \in \U.
\]
We denote this number $w$ by $\sh( \ulim_m a_m)$ and call it the \emph{shadow} of $\ulim_m a_m \in \ultra{\R}$.
\end{propdef}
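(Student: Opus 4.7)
The plan is to first establish uniqueness (an easy consequence of axiom (3)), then build $w$ by a nested-interval bisection, exploiting the fact that for any partition of $\N$ into two pieces, exactly one lies in $\U$ (by axioms (1)--(4)).

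For uniqueness, suppose that $w, w' \in \R$ both satisfy the defining property and $w \neq w'$. Setting $\epsilon := |w - w'|/2 > 0$, the triangle inequality forces the sets
\[
A := \{ m \in \N \mid |w - a_m| < \epsilon \}, \qquad A' := \{ m \in \N \mid |w' - a_m| < \epsilon \}
\]
to be disjoint. But both lie in $\U$, so by axiom (3) $A \cap A' = \emptyset \in \U$, contradicting axiom (1).

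For existence, I construct a nested sequence of closed intervals $[c_n, d_n]$ with $d_n - c_n = (M_2 - M_1)/2^n$ and $B_n := \{m \in \N \mid a_m \in [c_n, d_n]\} \in \U$. Start from $[c_0, d_0] := [M_1, M_2]$; the hypothesis $M_1 < a_m < M_2$ for all $m$ gives $B_0 = \N \in \U$. Given $[c_n, d_n]$, let $e_n := (c_n + d_n)/2$ and set
\[
B_n^- := \{ m \in \N \mid a_m \in [c_n, e_n] \}, \qquad B_n^+ := \{ m \in \N \mid a_m \in [e_n, d_n] \}.
\]
By axiom (4), either $B_n^- \in \U$, in which case I take $[c_{n+1}, d_{n+1}] := [c_n, e_n]$; or else $\N \setminus B_n^- \in \U$, and then by axiom (3) the set $B_n \cap (\N \setminus B_n^-) \subseteq B_n^+$ is in $\U$, so by axiom (2) $B_n^+ \in \U$, and I take $[c_{n+1}, d_{n+1}] := [e_n, d_n]$. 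In either case $B_{n+1} \in \U$. By completeness of $\R$, $\bigcap_n [c_n, d_n] = \{w\}$ for a unique $w \in \R$.

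To verify the defining property, fix $\epsilon > 0$ and choose $n$ with $d_n - c_n < \epsilon$. Since $w \in [c_n, d_n]$, we have $[c_n, d_n] \subseteq (w - \epsilon, w + \epsilon)$, and hence $B_n \subseteq \{ m \in \N \mid |w - a_m| < \epsilon \}$; by axiom (2), the latter set lies in $\U$. There is no real obstacle in this argument; the only mild care required is to use closed intervals throughout so that the two halves cover $B_n$ without loss (the overlap at $e_n$ is harmless), and to note that strict bounds in the hypothesis still yield $B_0 = \N \in \U$.
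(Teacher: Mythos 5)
The paper itself gives no proof here; it simply cites \cite[Theorem 5.6.1]{Gol} and records the statement as a Proposition-Definition. Your self-contained argument is correct, and it is essentially the standard proof of the existence of the shadow (standard part) of a bounded ultralimit. The uniqueness step via disjointness and axioms (1), (3) is airtight, and the bisection construction handles the only delicate point — that at each stage one of the two halves must be ``large'' — exactly right: axiom (4) gives the dichotomy, axiom (3) passes from $B_n \cap (\N \setminus B_n^-)$ to $B_n^+$, and axiom (2) upgrades. Using closed intervals so that $B_n \subseteq B_n^- \cup B_n^+$ with harmless overlap at the midpoint is the right care. The final verification, choosing $n$ with $d_n - c_n < \epsilon$ so that $B_n \subseteq \{m : |w - a_m| < \epsilon\}$ and invoking axiom (2), closes the argument. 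Since the paper outsources this to Goldblatt, there is no in-paper approach to compare against, but your proof is precisely what one would expect to find there: the construction is a direct translation of the nested-interval proof of the Bolzano--Weierstrass theorem, with the ultrafilter replacing the pigeonhole choice of which half contains infinitely many terms, which buys you a \emph{canonical} choice and hence a single well-defined limit point rather than a mere subsequential one.
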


\section{A variant of parameter test modules}
In this section, we define a variant of parameter test modules and prove the rationality of $F$-jumping numbers.

\begin{prop}\label{discrete1}
Let $(X=\Spec R, \D, \fa^t)$ be a triple such that $\D= s D$ for some Cartier divisor $D$ and $t=s=1/{(p^e-1)}$ for some integer $e>0$.
Then $\tau(\omega_X, (s-\epsilon) D, \fa^t)$ is constant for all sufficiently small rational numbers $0 < \epsilon \ll 1$.
\end{prop}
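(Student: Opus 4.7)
The plan is to reduce the statement to a discreteness property for the exponent of a principal ideal in a mixed (two-ideal) parameter test module, and then invoke the natural two-ideal analog of Lemma \ref{test basic} (3).

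First, I exploit the fact that $(R,\m)$ is local to pass from a Cartier divisor to a principal divisor: since $\Pic(R) = 0$, one may write $D = \Div_R(f)$ for some nonzero $f \in R$. Then Lemma \ref{test basic} (6), applied with trivial divisor and $\fb = (f)$ raised to the exponent $s - \epsilon$, gives the identification
\[
\tau(\omega_X, (s - \epsilon) D, \fa^t) \;=\; \tau\bigl(\omega_X, \fa^t (f)^{s - \epsilon}\bigr).
\]
Thus the question reduces to: is the function $u \mapsto \tau(\omega_X, \fa^t (f)^u)$ constant on a small left neighborhood of $u = s$?

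The next step is to establish the direct two-ideal analog of Lemma \ref{test basic} (3): for any fixed data $(R, \fa, t, \fb, u)$ with $u > 0$, there exists $\delta > 0$ such that
\[
\tau(\omega_X, \fa^t \fb^{u'}) \;=\; \tau(\omega_X, \fa^t \fb^{u - \delta}) \quad \text{for every } u - \delta \le u' < u.
\]
The proof of \cite[Lemma 6.2]{ST14} adapts essentially verbatim: one treats the factor $\fa^{\lceil t(p^{e'} - 1) \rceil}$ as fixed auxiliary data in the defining condition of the parameter test module, and runs the same Noetherian / Matlis-duality stabilization argument on the varying factor $\fb^{\lceil u(p^{e'} - 1) \rceil}$. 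Applying this two-ideal variant with $\fb = (f)$ and $u = s$ concludes the proof.

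The main obstacle is verifying that the argument of \cite[Lemma 6.2]{ST14} really does generalize to the two-ideal setting without any essentially new input. This should be a matter of careful bookkeeping, tracking how the varying ceilings $\lceil u(p^{e'} - 1) \rceil$ interact with the fixed factor coming from $\fa^{\lceil t(p^{e'} - 1) \rceil}$, but the core monotone-stabilization reasoning is unchanged. One could alternatively avoid invoking the two-ideal version entirely by using the Grothendieck-duality identification $\Hom(F^{e'}_* \omega_X(m D),\omega_X) \cong F^{e'}_*(f^m)$ (valid here because $D = \Div_R(f)$ is principal) to rewrite the defining condition of $\tau(\omega_X,(s-\epsilon)D,\fa^t)$ explicitly in terms of Frobenius traces of the ideal $(f)^{\lceil (p^{e'}-1)(s-\epsilon)\rceil}\fa^{\lceil t(p^{e'}-1)\rceil}$, and then argue stabilization of these conditions as $\epsilon \to 0^+$ using the same Noetherian mechanism.
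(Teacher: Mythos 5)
Your initial step — observe that $D = \Div_R(f)$ for some $f \in R$ since $R$ is local, then apply Lemma \ref{test basic}(6) to identify $\tau(\omega_X, (s-\epsilon)D, \fa^t)$ with $\tau(\omega_X, \fa^t (f)^{s-\epsilon})$ — is correct but circular rather than a reduction. Because $(f)$ is principal, Lemma \ref{test basic}(6) applied in the other direction shows that the ``two-ideal left-stabilization'' you now want is \emph{equivalent} to the statement of the proposition, not a simpler statement. All the content is concentrated in your unproved claim that the argument of \cite[Lemma 6.2]{ST14} ``adapts essentially verbatim.'' The paper carries out that adaptation, and it is not bookkeeping: one passes to the normalized blowup $\pi\colon Y \to X$ along $\fa$ (the ideal whose exponent is \emph{fixed}, not the one you are varying), so that $\fa\sO_Y = \sO_Y(-G)$ becomes Cartier; invokes the pushforward formula \cite[Theorem 5.1]{ST14} to write $\tau(\omega_X, \qadic{s}{l}\cdot D, \fa^t)$ as $\Tr_X^{em}\bigl(F^{em}_*\pi_*\tau(\omega_Y, q^m(\qadic{s}{l}\pi^*D + tG))\bigr)$; uses Lemma \ref{test basic}(3) and (6) \emph{on $Y$}, where the divisor $\qadic{s}{l}\pi^*D + tG$ is $\Q$-Cartier, to stabilize the upstairs test module in $l$; and finally proves a vanishing $R^1\pi_*(\NN_l \otimes \sO_Y(-MG)) = 0$ (using $\pi$-ampleness of $-G$) so that the integer $m$ appearing in \cite[Theorem 5.1]{ST14} can be chosen uniformly in $l$. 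Your sketch contains none of these ingredients; in particular it does not identify which ideal to blow up, nor why a uniformity statement across infinitely many Frobenius iterates is needed.

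The ``alternative route'' has a sharper flaw: by Lemma \ref{test basic}(1) the modules $\tau(\omega_X, (s-\epsilon)D, \fa^t)$ \emph{decrease} as $\epsilon \to 0^+$, so there is no ``Noetherian mechanism'' to invoke — Noetherianity stabilizes ascending chains, and here the chain descends. (This is exactly why the right limit, Lemma \ref{test basic}(2), is elementary while the left limit, Lemma \ref{test basic}(3), and the present proposition are not.) Rewriting the defining condition via $\Hom_R(F^{e'}_*\omega_X(mD), \omega_X) \cong F^{e'}_*(f^m R)$ is fine, but for each $e'$ the threshold of $\epsilon$ below which the $e'$-th condition becomes $\epsilon$-independent tends to $0$ as $e' \to \infty$, so the conditions do not stabilize by any formal argument. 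The geometric uniformity supplied by the blowup and the $R^1\pi_*$-vanishing is precisely what replaces the nonexistent ``Noetherian mechanism,'' and your proposal omits it.
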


\begin{proof}
The proof is essentially the same as that of \cite[Lemma 6.2]{ST14}.
We may assume that $\fa \neq 0$.
Set $q=p^e$.
For every integer $l \ge 0$, we define the $l$-th truncation of $s$ in the base $q$ by 
\[
\qadic{s}{l} : = \frac{q^l-1}{q^l (q-1)} \in \Q.
\]
Since the sequence $\{ \qadic{s}{l} \}_{l \in \N }$ is a strictly ascending chain which converges to $s$, it is enough to prove that $\tau(\omega_X, \qadic{s}{l} \cdot D, \fa^t)$ is constant for all sufficiently large $l$.

Take the normalized blowup $\pi : Y \to X$ along $\fa$.
Let $G$ be the Cartier divisor on $Y$ such that $\sO_Y(-G) = \fa \cdot \sO_Y$.
Take the Grothendieck trace maps $\Tr_{\pi}: \pi_* \omega_Y \to \omega_X$, $\Tr_X: F_* \omega_X \to \omega_X$ and $\Tr_Y: F_* \omega_Y \to \omega_Y$ (\cite[Proposition 2.18]{BST}).
As in \cite[p.4]{BST}, we have $\Tr_X \circ F_*( \Tr_{\pi})= \Tr_{\pi} \circ \pi_*(\Tr_Y)$ and $\Tr_{\pi}$ is injective. 
In particular, we may consider $\pi_* \omega_Y$ as a submodule of $\omega_X$.

By \cite[Theorem 5.1]{ST14}, for every integer $l \ge 0$, there exists an integer $m_l$ such that 
\begin{equation}\label{ST}
\tau(\omega_X, \qadic{s}{l} \cdot D, \fa^t) = \Tr_X^{e m}(F^{em}_* \pi_*(\tau( \omega_Y, q^m(\qadic{s}{l} \cdot \pi^*D + t G))))
\end{equation}
for all $m \ge m_l$.

By Lemma \ref{test basic} (3) and (6), there exists $l_0$ such that $\tau( \omega_Y, \qadic{s}{l}{} \cdot \pi^* D + t G)$ is constant for all $l \ge l_0$.
For every integer $l \ge 0$, it follows from Lemma \ref{test basic} (4) that the morphism
\[
\beta_l : = \Tr_Y^e : F^e_*( \tau( \omega_Y, q( \qadic{s}{l} \cdot \pi^* D + tG)))) \to \tau(\omega_Y,  \qadic{s}{l} \cdot \pi^* D + tG)
\]
is surjective.
We denote the kernel by $\NN_l$.
Since $\NN_l$ is constant for all $l \ge l_0$ and $-G$ is $\pi$-ample, there exists an integer $m'$ such that 
\[
R^1 \pi_* (\NN_l \otimes_{\sO_Y} \sO_Y(-MG))=0
\]
for all integers $l \ge 0$ and $M \ge (q^{m'}-1)/(q-1)$.

Take integers $m, n \ge 1$ and consider the surjection
\[
\gamma_{n,m} : = \Tr_Y^e : F^e_* (\tau(\omega_X, q^m( \qadic{s}{n} \pi^* D + t G))) \to \tau(\omega_X, q^{m-1}( \qadic{s}{n} \pi^* D + t G)).
\]
By Lemma \ref{test basic} (5) and (6), $\gamma_{n,m}$ coincides with $\beta_{n-m} \otimes \sO_Y( -  (q^m-1)/(q-1) \cdot (\pi^* D +G))$ if $m<n$ and with $\beta_0 \otimes \sO_Y(- q^m \qadic{s}{n}\pi^* D  -  (q^m-1)/(q-1) \cdot G)$ if $m \ge n$.
Therefore, $\pi_* \gamma_{n,m}$ is surjective if $m \ge m'$.

Combining with the equation (\ref{ST}), we have 
\[
\tau(\omega_X, \qadic{s}{l} \cdot D, \fa^t) = \Tr_X^{e m'}(F^{em'}_* \pi_*(\tau( \omega_Y, q^{m'}(\qadic{s}{l} \cdot \pi^*D + t G))))
\]
for every $l$.
By the definition of $l_0$, the right hand side is constant for all $l \ge l_0 + m'$.
\end{proof}

\begin{cor}\label{discrete2}
Let $(X= \Spec R, \D, \fa^t)$ be a triple such that $t \in \Q$ and $\D$ is $\Q$-Cartier.
Then $\tau(\omega_X, (1 - \epsilon) \D, \fa^t)$ is constant for all $0 < \epsilon \ll 1$.
\end{cor}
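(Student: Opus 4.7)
The plan is to reduce Corollary \ref{discrete2} to Proposition \ref{discrete1} by two successive maneuvers: first clearing factors of $p$ from the denominators of $\D$ and $t$ via the trace formula of Lemma \ref{test basic}(4), then putting the resulting data into the base-$(p^e-1)$ normal form required by the proposition. (The case $\fa = 0$ is trivial since both sides are $(0)$ by definition, so I may assume $\fa \ne 0$.)

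First I would clear $p$ from the denominators. Let $j_\D$ denote the $p$-adic valuation of the index of $\D$ and $j_t$ the $p$-adic valuation of the denominator of $t$ (in lowest terms), and set $j := \max(j_\D, j_t)$. Put $\D'' := p^j \D$ and $t'' := p^j t$; by construction $\D''$ is $\Q$-Cartier with index coprime to $p$, and $t'' \in \Q_{\ge 0}$ has denominator coprime to $p$. Iterating Lemma \ref{test basic}(4) a total of $j$ times yields
\[
\tau(\omega_X, (1-\epsilon)\D, \fa^t) = \Tr_R^j\bigl(F^j_*\, \tau(\omega_X, (1-\epsilon)\D'', \fa^{t''})\bigr)
\]
for every $\epsilon \in [0, 1]$, so constancy of the right-hand side as $\epsilon \to 0^+$ forces the same for the left-hand side. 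It therefore suffices to treat the case where the index of $\D$ and the denominator of $t$ are both coprime to $p$.

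Next I would put $(\D, t)$ into the form required by Proposition \ref{discrete1}. Choose an integer $e > 0$ such that $p^e - 1$ is divisible by both the index of $\D$ and the denominator of $t$; then $\tilde D := (p^e - 1)\D$ is Cartier and $c := (p^e - 1)t$ is a non-negative integer. With $s := 1/(p^e - 1)$, one has $\D = s \tilde D$ and $(1-\epsilon)\D = (s - s\epsilon)\tilde D$. Setting $\fb := \fa^c$ (or $\fb := R$ when $t = 0$), Lemma \ref{test basic}(7) gives
\[
\tau(\omega_X, (1-\epsilon)\D, \fa^t) = \tau(\omega_X, (s - s\epsilon)\tilde D, \fb^s),
\]
and Proposition \ref{discrete1} applied to the triple $(X, s\tilde D, \fb^s)$ shows that the right-hand side is constant for all sufficiently small rational $\epsilon > 0$. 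To pass from rational to real $\epsilon$, I would observe that the round-ups $\age{(p^{e'}-1)(1-\epsilon)\D}$ and $\age{t(p^{e'}-1)}$ (for $e' \ge 1$) entering the definition of the test ideal take only finitely many values on any bounded neighborhood of $\epsilon = 0$, so the test ideal is piecewise constant in $\epsilon$ and rational constancy propagates to real constancy.

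The main obstacle will be the trace-reduction step: one has to verify that the iterated Grothendieck trace $\Tr_R^j(F^j_*(-))$ really does transport stabilization of $\{\tau(\omega_X, (1-\epsilon)\D'', \fa^{t''})\}_\epsilon$ to stabilization of $\{\tau(\omega_X, (1-\epsilon)\D, \fa^t)\}_\epsilon$, and that iterating Lemma \ref{test basic}(4) produces the displayed identity with the correct scaling on both the divisor and the exponent. Once this has been checked, everything else is bookkeeping followed by a direct appeal to Proposition \ref{discrete1}.
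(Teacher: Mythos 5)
Your reduction --- Lemma \ref{test basic}(4) to peel $p$-power factors from the index of $\D$ and from the denominator of $t$, then (7) to rescale the exponent to $1/(p^e-1)$, then Proposition \ref{discrete1} --- is exactly what the paper does, and the bookkeeping (including the iterated trace identity) is correct. One quibble: the final paragraph is superfluous, since parameter test modules are defined only for $\Q$-Weil divisors (so $\epsilon$ is tacitly rational), and the offered justification that the round-ups $\age{(p^{e'}-1)(1-\epsilon)\D}$ for $e' \ge 1$ take finitely many values is false as stated since these grow without bound in $e'$; this does not, however, affect the correct part of the argument.
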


\begin{proof}
By Lemma \ref{test basic} (4) and (7), we may assume that there exists an integer $e>0$ such that $(p^e-1)\D$ is Cartier and $t=1 / (p^e-1)$.
Then the assertion follows from Proposition \ref{discrete1}. 
\end{proof}

We define the new variant of the parameter test module as the left limit of the map $s \mapsto \tau(\omega_X, s \D, \fa^t)$ at $s=1$.

\begin{defn}\label{ntau def}
Let $(X= \Spec R, \D, \fa^t)$ be a triple such that $t \in \Q$ and $\D$ is $\Q$-Cartier.
Then we define the submodule $\ntau{\D}{\fa^t} \subseteq \omega_X$ by $\tau(\omega_X, (1- \epsilon) \D, \fa^t)$ for sufficiently small $0<\epsilon \ll 1$.
\end{defn}

\begin{lem}\label{tau- basic}
Let $(X=\Spec R, \D, \fa^t)$ be a triple such that $t \in \Q$ and $\D$ is $\Q$-Cartier.
Then the following hold.
\begin{enumerate}
\item For any rational number $t<t'$, we have $\ntau{\D}{\fa^{t'}} \subseteq \ntau{\D} {\fa^t}$.
\item For any real number $s \ge 0$, there exists $0<\epsilon$ such that $\ntau{\D}{\fa^{s'}}$ is constant  for every rational number $s< s' < s+\epsilon$.
\item For any rational number $s > 0$, there exists $0<\epsilon$ such that $\ntau{\D}{\fa^{s'}}$ is constant  for every rational number $s- \epsilon < s' < s$.
\item If $\fa$ is generated by $l$ elements and $t \ge l$, then we have $\ntau{\D}{ \fa^t}=\fa \ntau{\D}{\fa^{t-1}}$.
\item $\Tr_X(F_* (\ntau{\D}{\fa^t}))= \ntau{(\D/p)}{ \fa^{t/p}}$.
\item If $r \D$ is Cartier, then $\ntau{(r+1) \D}{\fa^t}= \ntau{\D}{\fa^t} \otimes \sO_X(-r \D)$.
\end{enumerate}

\end{lem}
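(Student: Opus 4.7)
The guiding principle is that Corollary \ref{discrete2} identifies $\ntau{\D}{\fa^t}$ with $\tau(\omega_X, (1-\epsilon)\D, \fa^t)$ for every sufficiently small rational $\epsilon > 0$ (once $t \in \Q$ is fixed). Each of the six claims asserts an identity or constancy property which, for a fixed divisor, is already part of Lemma \ref{test basic}. My plan is therefore to reduce each part to its unperturbed counterpart by picking $\epsilon$ small enough to make the $\ntau$-identifications hold simultaneously at all finitely many triples appearing in the statement.

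For (1), (4), and (5) this is immediate: choose $\epsilon$ small and invoke Lemma \ref{test basic}(1), (5), and (4) respectively; for (5) one uses that $(1-\epsilon)\D/p = (1-\epsilon)(\D/p)$, so a single $\epsilon$ serves both sides of the trace identity. Part (6) requires a little more bookkeeping. Locally I would write $r\D = \Div_R(f)$ (possible since $r\D$ is Cartier) and split
\[
(1-\epsilon)(r+1)\D = (1-(r+1)\epsilon)\D + r\D.
\]
Applying Lemma \ref{test basic}(6) to absorb $r\D = \Div_R(f)$ into the ideal factor, followed by the principal-ideal case of Lemma \ref{test basic}(5) applied in the symmetric position to $\fb = (f)$, yields
\[
\tau(\omega_X, (1-\epsilon)(r+1)\D, \fa^t) = f \cdot \tau(\omega_X, (1-(r+1)\epsilon)\D, \fa^t).
\]
Letting $\epsilon \downarrow 0$ identifies the left side with $\ntau{(r+1)\D}{\fa^t}$ and the right side with $f \cdot \ntau{\D}{\fa^t}$, i.e.\ with $\ntau{\D}{\fa^t} \otimes \sO_X(-r\D)$.

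Part (2) is a Noetherian-module argument. By (1) the family $\{\ntau{\D}{\fa^{s'}} : s' > s,\, s' \in \Q\}$ is a chain in $\omega_X$ whose terms grow as $s' \downarrow s$. Since $\omega_X$ is a Noetherian $R$-module, the union $J$ of this chain is finitely generated, and so is already equal to $\ntau{\D}{\fa^{s_0'}}$ for some rational $s_0' > s$ (take the minimum of finitely many rationals each of which accounts for a generator). Applying (1) on $(s, s_0') \cap \Q$ then forces $\ntau{\D}{\fa^{s'}} = J$ throughout that interval, so $\epsilon := s_0' - s$ works.

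Part (3) is the main obstacle: the analogous family $\{\ntau{\D}{\fa^{s'}} : s' < s,\, s' \in \Q\}$ forms a descending chain as $s' \uparrow s$, and descending chains of submodules of $\omega_X$ need not stabilize, so the shortcut used for (2) is unavailable. My plan is to upgrade Proposition \ref{discrete1} to a joint-discreteness statement of the form: there exist $\delta, \epsilon_1 > 0$ with $\tau(\omega_X, (1-\epsilon)\D, \fa^{s'})$ constant for all rational $\epsilon \in (0, \epsilon_1)$ and all rational $s' \in (s-\delta, s)$. Concretely, after reducing via Lemma \ref{test basic}(4), (7) to the case $\D = \Div_R(g)/(p^e-1)$ with $s$ a rational multiple of $1/(p^e-1)$, I would re-run the base-$q$ truncation argument of Proposition \ref{discrete1} simultaneously in the scaling of $\D$ and in the exponent of $\fa$, using the same surjectivity of trace and $R^1\pi_*$-vanishing on the normalized blowup $\pi : Y \to X$ of $\fa$ to obtain uniform stabilization in both parameters. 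Granting this joint statement, (3) follows immediately: for any $\epsilon \in (0, \epsilon_1)$, the common value of $\tau(\omega_X, (1-\epsilon)\D, \fa^{s'})$ on the left neighborhood of $s$ is simultaneously the value of $\ntau{\D}{\fa^{s'}}$ at every rational $s' \in (s-\delta, s)$, giving the required constancy.
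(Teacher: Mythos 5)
Your treatment of parts (1), (2), (4), (5), and (6) matches the paper's: reduce each identity to the corresponding part of Lemma \ref{test basic} by choosing $\epsilon$ uniformly small, using Noetherianity of $\omega_X$ for the ascending stabilization in (2); your explicit splitting $(1-\epsilon)(r+1)\D = (1-(r+1)\epsilon)\D + r\D$ in (6) is a correct elaboration. The problem is part (3). You correctly identify that the descending chain has no automatic reason to stabilize, but your fix consists of a proposal to ``upgrade Proposition \ref{discrete1} to a joint-discreteness statement'' by re-running its blowup and truncation argument simultaneously in the divisor coefficient and in the exponent of $\fa$, and you then conclude by ``granting this joint statement.'' That is precisely the nontrivial step, and it is not carried out; as written (3) is a plan, not a proof.

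The paper closes this without revisiting Proposition \ref{discrete1} at all. Choose a positive integer $r$ with $rs \in \Z$ and $r\D$ Cartier. By Lemma \ref{test basic}(6) and (7), the \emph{diagonal} perturbation repackages into a single-parameter family:
\[
\tau\bigl(\omega_X, (1-\epsilon)\D, \fa^{s(1-\epsilon)}\bigr) \;=\; \tau\bigl(\omega_X, (\fa^{rs}\sO_X(-r\D))^{(1-\epsilon)/r}\bigr),
\]
to which the already-known left-limit stabilization, Lemma \ref{test basic}(3), applies directly, producing a module $M$ independent of $\epsilon$ for $0<\epsilon<\delta$. A monotonicity sandwich via Lemma \ref{test basic}(1) then decouples the two perturbation parameters, giving $\tau(\omega_X, (1-\epsilon)\D, \fa^{s(1-\epsilon')}) = M$ for all $0<\epsilon,\epsilon'<\delta$, hence $\ntau{\D}{\fa^{s'}}=M$ for all rational $s'$ in a left neighborhood of $s$. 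The moral is that one does not need a genuinely two-parameter discreteness statement: because $\D$ is $\Q$-Cartier and $s\in\Q$, the diagonal ray $(1-\epsilon)(\D, s)$ can be absorbed into the exponent of a single auxiliary fractional ideal, reducing to a one-parameter fact you already have.
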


\begin{proof}
(1), (4), (5) and (6) follow from Lemma \ref{test basic}.
(2) follows from (1) and the ascending chain condition for the set of ideals in $R$.

For (3), we take a positive integer $r$ such that $r s$ is integer and $r \D$ is Cartier.
By Lemma \ref{test basic} (3), there exists $\delta>0$ such that $\tau(\omega_X, (\fa^{rs} \sO_X (-r \D))^{(1-\epsilon)/r})$ is constant for all rational numbers $0 < \epsilon < \delta$.
We denote this module by $M$.

It follows from Lemma \ref{test basic} (6) and (7) that for every rational number $0<\epsilon<\delta$, we have
\begin{eqnarray*}
\tau( \omega_X, (1-\epsilon) \D, \fa^{s (1-\epsilon)}) &=& \tau(\omega_X, \fa^{ s (1-\epsilon)} \sO_X (-r \D)^{(1- \epsilon)/r}) \\
&=& \tau(\omega_X, (\fa^{r s} \sO_X (-r \D))^{(1-\epsilon)/r})\\
&=& M.
\end{eqnarray*}
By Lemma \ref{test basic} (1), $\tau( \omega_X, (1-\epsilon) \D, \fa^{s (1-\epsilon')})=M$ for every $0< \epsilon , \epsilon' < \delta$.
Therefore, we have $\ntau{\D}{\fa^{s (1-\epsilon)}}=M$ for every rational number $0<\epsilon < \delta$.
\end{proof}

\begin{defn}
Let $(X= \Spec R, \D, \fa^t)$ be a triple such that $t$ is not a rational number and $\D$ is $\Q$-Cartier.
By Lemma \ref{tau- basic} (2), there exists $\epsilon >0$ such that the submodule $\ntau{ \D}{ \fa^{s}} \subseteq \omega_X$ is constant for every rational number $t<s<t+\epsilon$.
We denote this submodule of $\omega_X$ by $ \ntau{ \D}{ \fa^t}$.
\end{defn}

We note that even if $t$, $t'$, $s$, and $s'$ are not rational, the same assertions as in Lemma \ref{tau- basic} (1), (2), (4), (5) and (6) hold.

\begin{defn}\label{fjn def}
Let $(X= \Spec R, \D, \fa)$ be a triple such that $\D$ is $\Q$-Cartier.
A real number $t \ge 0$ is called an \emph{$F$-jumping number} of $\ntri{\D}{\fa}$ if one of the following hold:
\begin{enumerate}
\item for every $\epsilon>0$, we have $\ntau{\D}{ \fa^t} \subsetneq \ntau{\D}{\fa^{t-\epsilon}}$, or
\item for every $\epsilon>0$, we have $\ntau{\D}{\fa^t} \supsetneq \ntau{\D}{\fa^{t+\epsilon}}$.
\end{enumerate}
\end{defn}

\begin{lem}\label{rationality lemma}
Let $q \ge 2$ and $l \ge 1$ be integers and $B \subseteq \R_{\ge 0}$ a subset.
$B$ is a discrete set of rational numbers if the following four properties hold:
\begin{enumerate}
\item For any $x \in B$, $q x \in B$.
\item For any $x \in B$, if $x>l$, then $x-1 \in B$.
\item For any real number $t \in \R_{\ge 0}$, there exists $\epsilon>0$ such that $B \cap (t, t+\epsilon) = \emptyset$.
\item For any rational number $t \in \Q_{> 0}$, there exists $\epsilon>0$ such that $B \cap (t-\epsilon, t) = \emptyset$.
\end{enumerate}
\end{lem}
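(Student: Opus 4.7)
The plan is to argue by contradiction. I will manufacture a nonempty closed subset $C \subseteq (0,1)$ consisting of irrationals, invariant under the map $T(z) := qz - \lfloor qz \rfloor$ and having no right-accumulation points in $[0,1]$, then rule out its existence by combining a Cantor-Bendixson analysis with the fact that every periodic point of $T$ is rational.

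First I reduce to finding an irrational left-accumulation point of $B$ inside $(l, l+1)$. If some $x \in B$ is irrational, then (1) gives $q^n x \in B$ and iterating (2) for $n$ large yields $\{q^n x\} + l \in B \cap (l, l+1)$; since $x$ is irrational, the values $\{q^n x\}$ are pairwise distinct, so this subset of $[l,l+1]$ is infinite and Bolzano-Weierstrass supplies an accumulation point, which by (3) is a left-accumulation and by (4) is irrational and strictly inside $(l, l+1)$. If instead $B$ merely has some accumulation point $a_0 > 0$, a witness sequence $y_n \nearrow a_0$ in $B$ gives, via (1) and iterated (2) for $N$ large, a sequence in $B$ increasing to $\{q^N a_0\} + l \in (l, l+1)$; this limit is irrational and is again a left-accumulation point of $B$.

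Call this point $a \in (l, l+1)$ and define
\[
C := \{z \in (0,1) : z + l \text{ is a left-accumulation point of } B\},
\]
so $a - l \in C$; by (4) every element of $C$ is irrational. For $T$-invariance, given $z \in C$ with $y_n \nearrow z + l$ in $B$, the sequence $qy_n \in B$ eventually lies above $\lfloor q(z+l)\rfloor$, and iterated (2) lets us subtract $\lfloor q(z+l)\rfloor - l$ to obtain a sequence in $B$ increasing to $T(z) + l$ from below, so $T(z) \in C$. For closedness in $\R$: a $C$-sequence converging from above would produce a right-accumulation of $B$, contradicting (3); a $C$-sequence converging from below to an interior point $z \in (0,1)$ lies in $C$ by the analogous construction; and limits at the endpoints $0$ or $1$ would give a right-accumulation of $B$ at $l$ or a left-accumulation of $B$ at the rational $l+1$, each forbidden.

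Finally form the Cantor-Bendixson derivatives $C^{(0)} := C$, $C^{(\alpha+1)} := $ the set of accumulation points of $C^{(\alpha)}$, and $C^{(\lambda)} := \bigcap_{\alpha < \lambda} C^{(\alpha)}$ at limit ordinals; an induction (using continuity of $T$ at every irrational) shows each $C^{(\alpha)}$ is a closed $T$-invariant subset of the irrationals. By compactness of $C \subseteq [0,1]$, the first $\alpha_0$ with $C^{(\alpha_0)} = \emptyset$ cannot be a limit ordinal, so $\alpha_0 = \beta_0 + 1$ and $C^{(\beta_0)}$ is a nonempty closed set with no accumulation points, hence finite. A nonempty finite $T$-invariant set contains a periodic point, which satisfies $(q^n-1)z \in \Z$ for some $n$ and is therefore rational, contradicting $C^{(\beta_0)} \subseteq$ irrationals. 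Thus $C = \emptyset$, contradicting $a - l \in C$. The most delicate step is the verification of $T$-invariance and closedness of $C$, which requires careful tracking of the direction of approach in each limit and of the valid range for iterating (2); once these are in hand, the Cantor-Bendixson finale is mechanical.
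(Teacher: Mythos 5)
Your approach differs substantially from the paper's: the paper obtains the discreteness of $B$ by citing Proposition~5.5 of [BSTZ] and then deduces rationality from discreteness in two lines using (1), (2) and the fact that $\{q^nx\}$ are pairwise distinct for irrational $x$; you instead attempt to prove discreteness from scratch via a Cantor--Bendixson analysis of the set $C$ of shifted left-accumulation points. The construction and basic properties of $C$ (nonemptiness, closedness, $T$-invariance, irrationality, the reduction to an irrational left-accumulation point in $(l,l+1)$) are essentially correct as you present them.

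There is, however, a genuine gap at the Cantor--Bendixson step. You posit ``the first $\alpha_0$ with $C^{(\alpha_0)}=\emptyset$'' and argue it is a successor, but you never establish that any such $\alpha_0$ exists. The Cantor--Bendixson derivatives of a compact set merely stabilize at a perfect kernel $P$, and $P$ may well be nonempty: compactness, $T$-invariance and irrationality together do not force $C$ to be scattered. Indeed, nonempty perfect compact $T$-invariant subsets of $(0,1)$ consisting entirely of irrationals do exist --- for instance the image in $[0,1]$, via base-$q$ expansion using digits $\{0,1\}$, of the orbit closure of the Thue--Morse sequence is such a set, since that subshift is minimal, aperiodic, and avoids the blocks $000$ and $111$ so that no member has an eventually periodic or eventually constant digit tail. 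Your periodic-point contradiction never engages such a $P$. Fortunately, the missing ingredient is something you already verified: $C$ has no right-accumulation points. A nonempty perfect set $P\subseteq\R$ bounded below has $\min P$ as a right-accumulation point (it lies in $P$ since $P$ is closed, it is not isolated since $P$ is perfect, and it can only be approached from above). Hence the perfect kernel of $C$ is empty; equivalently, ``closed with no right-accumulation points'' makes $C$ well-ordered, hence countable, hence scattered. Once this observation is inserted, the Cantor--Bendixson process does terminate at $\emptyset$ and the rest of your argument goes through.
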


\begin{proof}
Let $D$ be the set of all accumulation points of $B$.
By \cite[Proposition 5.5]{BSTZ}, we have $D= \emptyset$.
This proves that $B$ is a discrete set.
If $B$ contains a non-rational number, then by the assumptions (1) and (2), we have infinitely many elements in $B \cap [l-1, l]$, which contradicts to the discreteness of $B$.
\end{proof}

\begin{cor}\label{F-jump rat}
Let $(X= \Spec R , \D , \fa)$ is a triple such that $\D$ is $\Q$-Cartier.
Then the set of all $F$-jumping numbers of $\ntri{\D}{\fa}$ is a discrete set of rational numbers.
\end{cor}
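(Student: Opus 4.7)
The plan is to apply Lemma \ref{rationality lemma} to the set $B$ of $F$-jumping numbers of $\ntri{\D}{\fa}$. I may assume $\fa \neq 0$. I fix an integer $e \geq 1$ such that $(p^e - 1)\D$ is Cartier, set $q := p^e \geq 2$, and let $l \geq 1$ be an integer such that $\fa$ is generated by $l$ elements. I will verify that $B$, $q$, and $l$ satisfy hypotheses (1)--(4) of Lemma \ref{rationality lemma}, and the conclusion will follow immediately.

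Hypotheses (3) and (4) are direct consequences of Lemma \ref{tau- basic} (2) and (3), respectively, in view of the remark after the definition of $\ntau{\D}{\fa^t}$ for irrational $t$ which extends those assertions to all real exponents. For hypothesis (2), I take $x \in B$ with $x > l$. For any sufficiently small $\epsilon > 0$ one has $x - \epsilon > l$, and Lemma \ref{tau- basic} (4) yields
\[
\ntau{\D}{\fa^x} = \fa \cdot \ntau{\D}{\fa^{x-1}}, \qquad \ntau{\D}{\fa^{x \pm \epsilon}} = \fa \cdot \ntau{\D}{\fa^{x-1 \pm \epsilon}}.
\]
Since multiplication by $\fa$ preserves equalities of submodules, any equality $\ntau{\D}{\fa^{x-1}} = \ntau{\D}{\fa^{(x-1) \pm \epsilon}}$ would force the corresponding equality at $x$, contradicting $x \in B$. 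Hence $x - 1 \in B$.

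The main step, and the place where the choice of $q$ is essential, is hypothesis (1). Iterating Lemma \ref{tau- basic} (5) $e$ times gives $\Tr^e_X F^e_* \ntau{p^e \D}{\fa^s} = \ntau{\D}{\fa^{s/p^e}}$ for every real $s \geq 0$, and combining this with Lemma \ref{tau- basic} (6) for $r = p^e - 1$ (which applies because $(p^e-1)\D$ is Cartier) and setting $s = qt$ produces the identity
\[
\Tr^e_X\bigl(F^e_*\bigl(\ntau{\D}{\fa^{qt}} \otimes \sO_X(-(p^e-1)\D)\bigr)\bigr) = \ntau{\D}{\fa^t}
\]
valid for every real $t \geq 0$. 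The left-hand side is functorial in $\ntau{\D}{\fa^{qt}}$, so it sends equalities to equalities. Consequently, if $\ntau{\D}{\fa^{qt}} = \ntau{\D}{\fa^{qt \mp \epsilon}}$ for some $\epsilon > 0$, then $\ntau{\D}{\fa^t} = \ntau{\D}{\fa^{t \mp \epsilon/q}}$. The contrapositive says $t \in B \Rightarrow qt \in B$, verifying (1). I expect the main obstacle to be the careful bookkeeping of the line-bundle twist $\sO_X(-(p^e-1)\D)$ in this identity; once that formula is in hand, all four hypotheses of Lemma \ref{rationality lemma} are in force and the discreteness and rationality of $B$ follow at once.
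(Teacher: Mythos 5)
Your overall strategy is the same as the paper's: verify the four hypotheses of Lemma~\ref{rationality lemma} for $B$, $q=p^e$, and $l$. Your detailed verifications of (1)--(4), in particular the combination of Lemma~\ref{tau- basic}~(5) and (6) to produce hypothesis~(1), are correct, and they flesh out what the paper dismisses in a single line. However, there is a genuine gap at the very first step: you write ``I fix an integer $e \geq 1$ such that $(p^e-1)\D$ is Cartier,'' but the corollary only assumes $\D$ is $\Q$-Cartier, and no such $e$ exists whenever the Cartier index of $\D$ is divisible by $p$. Indeed, $\gcd(p^e-1,p)=1$, so if $r\D$ is Cartier with $r = p^a m$ and $a \geq 1$, then $(p^e-1)\D$ still has Cartier index divisible by $p$, and your chosen $e$ cannot exist. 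Since hypotheses (1) and, via Lemma~\ref{tau- basic}~(3), hypothesis~(4) both rely on this choice, the proof collapses without it.

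The paper closes this gap with a preliminary reduction that you omit: it first applies Lemma~\ref{tau- basic}~(5) to observe that if $t$ is an $F$-jumping number of $\ntri{\D}{\fa}$ then $pt$ is an $F$-jumping number of $\ntri{p\D}{\fa}$, i.e.\ $p^n\,B(\D) \subseteq B(p^n\D)$ for every $n$. Choosing $n$ so that the Cartier index of $p^n\D$ is prime to $p$, one may then find the needed $e$ for $p^n\D$, prove discreteness and rationality of $B(p^n\D)$, and deduce the same for $B(\D)$. Inserting this reduction before your paragraph on hypothesis~(1) would complete the argument, at which point your proof and the paper's coincide.
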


\begin{proof}
It follows from Lemma \ref{tau- basic} (5) that if $t$ is an $F$-jumping number of $\ntri{ \D} {\fa}$, then $p t$ is an $F$-jumping number of $\ntri{(p \D) }{\fa}$.
Therefore, we may assume that there exists an integer $e>0$ such that $(p^e-1)\D$ is Cartier. 

Let $l$ be the number of minimal generators of $\fa$ and $B$ be the set of all $F$-jumping numbers of $\ntri{\D}{\fa}$.
Then it follows from Lemma \ref{tau- basic} that $B$, $q=p^e$ and $l$ satisfy the assumptions in Lemma \ref{rationality lemma}.
\end{proof}

\section{Proof of Main Theorem}
In this section, applying Corollary \ref{F-jump rat}, we prove the rationality of $F$-pure thresholds (Corollary \ref{rat}).
We also prove that the shadow of $F$-pure thresholds coincides with the $F$-pure threshold on the catapower (Theorem \ref{fpt sh 2}).
By combining them, we give the proof of the main theorem (Theorem \ref{main}).

\begin{prop}\label{fpt fjn}
Suppose that $(X=\Spec A, \D)$ is a sharply $F$-pure pair such that $A$ is regular and $(p^e-1)\D$ is Cartier for some $e>0$, and $\fa \subseteq A$ is a non-zero proper ideal.
Then the $F$-pure threshold $\fpt(A,\D; \fa)$ coincides with the first jumping number of $\ntri{\D}{\fa}$.
In particular, it is a rational number.
\end{prop}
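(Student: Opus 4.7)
The plan is to show that $\fpt(A, \Delta; \fa)$ equals
\[
c^* := \inf\{t \geq 0 \mid \ntau{\Delta}{\fa^t} \subsetneq \omega_X\},
\]
observe that $c^*$ is the first $F$-jumping number of $\ntri{\Delta}{\fa}$ in the sense of Definition \ref{fjn def}, and then invoke Corollary \ref{F-jump rat} for rationality. The mechanism is Lemma \ref{tau- vs Fpure}, which trades sharp $F$-purity of $(A, \Delta, \fa^t)$ against strong $F$-regularity of the perturbed triple $(A, (1-\epsilon)\Delta, \fa^{t(1-\epsilon')})$; on the regular ring $A$ the latter is detected by $\tau(A, (1-\epsilon)\Delta, \fa^{t(1-\epsilon')}) = A$, which, up to the identification of $\tau(\omega_X, -, -)$ with $\tau(A, -, -)$ available on a Gorenstein ring, is precisely the condition controlling the eventual value $\ntau{\Delta}{\fa^t}$.

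Two preliminaries come first. At $t = 0$, since $A$ is strongly $F$-regular and $(A, \Delta)$ is sharply $F$-pure, $(A, (1-\epsilon)\Delta)$ is strongly $F$-regular for every $\epsilon \in (0,1)$, so $\tau(\omega_X, (1-\epsilon)\Delta) = \omega_X$ and hence $\ntau{\Delta}{\fa^0} = \omega_X$, making the inf defining $c^*$ well-posed. Moreover, by monotonicity of test ideals in the boundary divisor, the a priori statement that $\tau(A, (1-\epsilon)\Delta, \fa^t) = A$ for all sufficiently small $\epsilon$ upgrades automatically to ``for every $\epsilon \in (0, 1)$''; thus $\ntau{\Delta}{\fa^t} = \omega_X$ if and only if $(A, (1-\epsilon)\Delta, \fa^t)$ is strongly $F$-regular for every $\epsilon \in (0, 1)$.

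For $c^* \geq \fpt$, I would take $t < \fpt(A, \Delta; \fa)$: then $(A, \Delta, \fa^t)$ is sharply $F$-pure by Lemma \ref{F-sing basic} (2), so Lemma \ref{tau- vs Fpure} (1) gives $(A, (1-\epsilon)\Delta, \fa^{t(1-\epsilon')})$ strongly $F$-regular for all $\epsilon, \epsilon' \in (0, 1)$; letting $\epsilon' \to 0$ and using monotonicity of $\tau$ in the exponent forces $\ntau{\Delta}{\fa^{t'}} = \omega_X$ for every $t' < t$. For $c^* \leq \fpt$, I would take $t < c^*$: the second preliminary gives $(A, (1-\epsilon)\Delta, \fa^t)$ strongly $F$-regular for every $\epsilon \in (0, 1)$, and Lemma \ref{tau- vs Fpure} (2), whose hypothesis that $(p^e-1)\Delta$ is Cartier is exactly what is assumed, then yields $(A, \Delta, \fa^{t(1-\epsilon')})$ sharply $F$-pure for every $\epsilon' \in (0, 1)$, so $t \leq \fpt$. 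Finally, a short case split confirms that $c^*$ itself is an $F$-jumping number (condition (1) of Definition \ref{fjn def} if the infimum is attained, condition (2) otherwise), and rationality is immediate from Corollary \ref{F-jump rat}.

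The main care point, rather than a conceptual obstacle, will be the quantifier bookkeeping between ``for sufficiently small $\epsilon$'' in Definition \ref{ntau def} and ``for every $0 < \epsilon < 1$'' required by the hypothesis of Lemma \ref{tau- vs Fpure} (2). This is exactly what the monotonicity observation in the second preliminary handles, and it is what makes the two applications of Lemma \ref{tau- vs Fpure} match up without any slack.
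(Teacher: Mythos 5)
Your proposal is correct and matches the paper's argument: both identify $\fpt(A,\D;\fa)$ with the threshold value $c^*$ (the paper phrases it as $\sup\{s\ge 0: \ntau{\D}{\fa^s}=\omega_X\}$, which is the same number), using Lemma \ref{tau- vs Fpure}(1) for one inequality, Lemma \ref{tau- vs Fpure}(2) together with monotonicity in the boundary divisor for the other, and then Corollary \ref{F-jump rat} for rationality. Your extra bookkeeping (the $t=0$ base case, the upgrade from ``sufficiently small $\epsilon$'' to ``all $\epsilon\in(0,1)$'', and the two-case check that $c^*$ is the first jumping number) makes explicit steps the paper leaves implicit, but the route is the same.
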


\begin{proof}
It is enough to show the equation
\begin{equation}\label{4.2}
\fpt(A, \D; \fa)= \sup \left\{s \ge 0 \mid \ntau{\D}{ \fa^s} = \omega_X \right\}.
\end{equation}
Set $t : = \fpt (A, \D; \fa)$.
Since $A$ is regular local, we may identify $\omega_X$ with $A$.
By Lemma \ref{tau- vs Fpure} (1), we have $\ntau{\D}{\fa^{t(1-\epsilon)}}=\omega_X$ for every $0 < \epsilon<1$.

On the other hand, take any rational number $s$ such that $\ntau{\D}{\fa^s} = \omega_X$.
It follows from Lemma \ref{tau- vs Fpure} (2) that $(A, \D, \fa^{s(1-\epsilon)})$ is sharply $F$-pure for every $0<\epsilon <1$, which proves the equation (\ref{4.2}).
\end{proof}

\begin{cor}[Theorem \ref{intro rat}]\label{rat}
Suppose that $(R,\D)$ is a sharply $F$-pure pair such that $(p^e-1)(K_R+\D)$ is Cartier for some integer $e>0$ and $\fa \subseteq R$ is an ideal.
Then the $F$-pure threshold $\fpt(R,\D; \fa)$ is a rational number.
\end{cor}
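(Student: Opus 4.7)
The plan is to reduce the assertion to Proposition \ref{fpt fjn} via completion and Schwede's $F$-adjunction. The rationality statement for the regular local case established there, combined with the transfer of $F$-pure thresholds provided by Proposition \ref{F-adj}, should immediately yield the result.

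More precisely, I would proceed as follows. First, by Lemma \ref{F-sing basic} (5) sharp $F$-purity passes between $(R, \D, \fa^t)$ and $(\widehat{R}, \widehat{\D}, (\fa\widehat{R})^t)$, so
\[
\fpt(R, \D; \fa) \;=\; \fpt(\widehat{R}, \widehat{\D}; \fa\widehat{R}),
\]
and the Cartier condition on $(p^e-1)(K_R+\D)$ is preserved under flat pullback to $\Spec \widehat{R}$. Hence we may assume $R$ is complete. Since $R$ is $F$-finite, complete, normal and local, Cohen's structure theorem provides an $F$-finite coefficient field $k \subseteq R$ and a surjection $A \twoheadrightarrow R$ from an $F$-finite regular local ring $A = k[[x_1, \dots, x_n]]$. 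Applying Proposition \ref{F-adj} to this presentation produces an effective $\Q$-Weil divisor $\D_A$ on $\Spec A$ with $(p^e-1)\D_A$ Cartier such that
\[
\fpt(R, \D; \fa) \;=\; \fpt(A, \D_A; \tilde{\fa}),
\]
where $\tilde{\fa} \subseteq A$ denotes the lift of $\fa$. Finally, Proposition \ref{fpt fjn} identifies the right-hand side with the first $F$-jumping number of $\ntri{\D_A}{\tilde{\fa}}$, and this number is rational by Corollary \ref{F-jump rat}.

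The substantive obstacle is not in this proof itself but upstream, in establishing Corollary \ref{F-jump rat}: the construction of the perturbed parameter test modules $\ntau{\D}{\fa^t}$ and the proof that their jumping numbers form a discrete set of rationals already does all the genuine work. Given that result and the $F$-adjunction principle, the corollary reduces to a short bookkeeping argument; the only point requiring care is that all hypotheses needed to invoke Proposition \ref{F-adj}, namely sharp $F$-purity and the Cartier index of $K_{R}+\D$, transfer to the completion, which is immediate from the setup in Section 2.
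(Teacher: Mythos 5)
Your proposal matches the paper's proof step for step: reduce to the complete case via Lemma \ref{F-sing basic} (5), pass to a regular local ring via Cohen's structure theorem and Proposition \ref{F-adj}, and conclude with Proposition \ref{fpt fjn}. The paper states this in three terse sentences; your write-up simply supplies the routine details.
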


\begin{proof}
By Lemma \ref{F-sing basic} (5), we may assume that $R$ is a complete local ring.
By Proposition \ref{F-adj}, we may assume that $R$ is a regular local ring.
Hence, the assertion follows from Proposition \ref{fpt fjn}.
\end{proof}

\begin{lem}\label{fpt concent}
Suppose that $A$ is an $F$-finite regular local ring, $f \in A$ is a non-zero element, $\fa \subseteq A$ is an ideal, $e>0$ is an integer and $t=u/v > 0$ is a rational number with integers $u,v>0$.
Set $\fb : = f^v \cdot \fa ^{(p^e-1)u} \subseteq A$ and $\D : = \Div_A (f)/ (p^e-1)$.
Assume that $(A, \D)$ is sharply $F$-pure.
Then $t \le \fpt(A, \D ; \fa)$ if and only if $1/(v(p^e-1)) \le \fpt(A; \fb)$.

\end{lem}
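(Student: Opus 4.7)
The plan is to translate both inequalities into parallel statements about strong $F$-regularity of perturbed triples and then use Lemma \ref{F-sing basic}(4) to exhibit a direct correspondence. Set $c := v(p^e-1)$, so that $\Delta = v\Div_A(f)/c$ and $t = u(p^e-1)/c$.

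\emph{Parameter translation.} For any rational $\lambda \in (0,1)$, I apply Lemma \ref{F-sing basic}(4) with trivial base divisor, ideal $\fa^{(p^e-1)u}$, element $f^v$, and exponent $\lambda/c$. Using the identities $f^v \cdot \fa^{(p^e-1)u} = \fb$, $(\lambda/c)\Div_A(f^v) = \lambda\Delta$, and $(p^e-1)u \cdot (\lambda/c) = \lambda t$, this produces the equivalence
\[
(A, \lambda\Delta, \fa^{\lambda t}) \text{ is strongly $F$-regular} \iff (A, \fb^{\lambda/c}) \text{ is strongly $F$-regular}.
\]

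\emph{Threshold reformulation.} I will show that $\fpt(A, \Delta; \fa) \ge t$ holds if and only if $(A, \lambda\Delta, \fa^{\lambda t})$ is strongly $F$-regular for every rational $\lambda \in (0,1)$. For the forward direction, fix such a $\lambda$ and pick rational $\lambda' \in (\lambda, 1)$; since $\lambda' t < t \le \fpt(A, \Delta; \fa)$, the triple $(A, \Delta, \fa^{\lambda' t})$ is sharply $F$-pure, and Lemma \ref{tau- vs Fpure}(1) applied with $\epsilon = 1-\lambda$ and $\epsilon' = 1-\lambda/\lambda'$ yields precisely the desired strong $F$-regularity of $(A, \lambda\Delta, \fa^{\lambda t})$. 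For the reverse, fix any rational $s < t$ and $\epsilon \in (0,1)$, and choose rational $\lambda$ with $\max(1-\epsilon,\, s/t) < \lambda < 1$; monotonicity (Lemma \ref{F-sing basic}(2)) transfers the hypothesized strong $F$-regularity of $(A, \lambda\Delta, \fa^{\lambda t})$ to that of $(A, (1-\epsilon)\Delta, \fa^s)$. Since $(p^e-1)\Delta = \Div_A(f)$ is Cartier, Lemma \ref{tau- vs Fpure}(2) then yields sharp $F$-purity of $(A, \Delta, \fa^{s(1-\epsilon')})$ for every $\epsilon' \in (0,1)$, whence $\fpt(A, \Delta; \fa) \ge s$; letting $s \to t$ gives $\fpt(A, \Delta; \fa) \ge t$. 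The identical argument (simpler, taking $\Delta = 0$) shows that $\fpt(A; \fb) \ge 1/c$ if and only if $(A, \fb^{\lambda/c})$ is strongly $F$-regular for every rational $\lambda \in (0,1)$.

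Combining the two threshold reformulations with the parameter translation yields the required equivalence $t \le \fpt(A, \Delta; \fa) \iff 1/(v(p^e-1)) \le \fpt(A; \fb)$. The main obstacle is the forward direction of the threshold reformulation: Lemma \ref{tau- vs Fpure}(1) only produces strong $F$-regularity at parameters \emph{strictly smaller} than the input, so the two-parameter trick $\epsilon = 1-\lambda$, $\epsilon' = 1-\lambda/\lambda'$ (with an auxiliary $\lambda' \in (\lambda,1)$) is essential to land exactly on the diagonal triple $(A, \lambda\Delta, \fa^{\lambda t})$ that matches the parameter-translation step.
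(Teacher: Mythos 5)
Your proof is correct and follows essentially the same route as the paper's: translate the two threshold conditions into families of strong $F$-regularity statements for perturbed triples via Lemma~\ref{tau- vs Fpure}, and match those families using Lemma~\ref{F-sing basic}(4) (together with the standard identity equating $(\fa^r)^s$ and $\fa^{rs}$). The paper's own proof compresses the perturbation into a single $\epsilon$ and leaves the boundary case $t=\fpt(A,\D;\fa)$ implicit; your two-parameter variant ($\lambda$ and $\lambda'$) in the forward direction of the threshold reformulation makes that step fully rigorous, which is a welcome extra bit of care rather than a different argument.
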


\begin{proof}
We may assume that $\fa \neq (0)$.
First, we assume that $t \le \fpt(A, \D; \fa)$.
By Lemma \ref{tau- vs Fpure} (1), the triple $(A, (1-\epsilon) \D, \fa^{(1-\epsilon) t})$ is strongly $F$-regular for every $0 < \epsilon <1$.
It follows from Lemma \ref{F-sing basic} (4) that the triple $(A, \fb^{(1-\epsilon)/(v(p^e-1))})$ is strongly $F$-regular, which proves the inequality $1/(v(p^e-1)) \le \fpt(A; \fb)$.

On the other hand, we assume that $1/(v(p^e-1)) \le \fpt(A; \fb)$.
By Lemma \ref{F-sing basic} (3) and (4), the triple $(A, (1-\epsilon) \D, \fa^{(1-\epsilon)t})$ is strongly $F$-regular for every $0< \epsilon<1$.
It follows from \ref{F-sing basic} (2) that the triple $(A, (1-\epsilon)\D, \fa^{(1-\epsilon')t})$ is strongly $F$-regular for every $0< \epsilon, \epsilon'<1$.
By Lemma \ref{tau- vs Fpure} (2), we have $t \le \fpt(A, \D; \fa)$.
\end{proof}

\begin{prop}\label{fpt sh 1}
Suppose that $A$ is an $F$-finite regular local ring, $e>0$ is an integer, $\D_m = \Div_A (f_m) /(p^e-1)$ is an effective $\Q$-divisor on $\Spec A$ for every $m \in \N$ and $\fa_m \subseteq A$ is a proper ideal for every $m \in \N$.
Fix a non-principal ultrafilter $\U$.
Let $\cata{A}$ be the catapower of $A$ and $\fa_\infty : = \catae{\fa_m} \subseteq \cata{A}$.
Assume that $(A, \D_m)$ is sharply $F$-pure for every integer $m$.
Then the following hold.
\begin{enumerate}
\item $f_\infty := \catae{f_m} \in \cata{A}$ is a non-zero element.
\item Set $\D_\infty := \Div_{\cata{A}}(f_\infty)/(p^e-1)$. 
Then, $(\cata{A}, \D_\infty)$ is sharply $F$-pure.
\item For every rational number $t > 0$, we have $t \le \fpt(\cata{A}, \D_\infty ; \fa_\infty)$ if and only if $\{ m \in \N \mid t \le \fpt(A, \D_m ; \fa_m)\} \in \U$.
\end{enumerate}

\end{prop}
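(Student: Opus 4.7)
My plan is to dispatch (1) and (2) simultaneously through Fedder's criterion, by reducing sharp $F$-purity of $(A, \Delta)$ with $(p^e-1)\Delta = \Div_A(f)$ to the single condition $f \notin \m^{[p^e]}$ and then showing that this condition survives the catapower via a finite pigeonhole argument. For (3) I would deploy Lemma \ref{fpt concent} to strip away the boundary divisor and reduce to the catapower comparison for $F$-pure thresholds of ideals on a regular local ring, which is available in \cite{Sat}.

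For the Fedder reduction, Lemma \ref{Fedder} (with $\fa = A$, $t = 0$) says $(A, \Delta)$ is sharply $F$-pure iff some $n \ge 1$ makes $f^{(p^{en}-1)/(p^e-1)} \notin \m^{[p^{en}]}$. Writing the exponent as $1 + p^e + \cdots + p^{e(n-1)}$, if $f \in \m^{[p^e]}$ then the factor $f^{p^{e(n-1)}}$ already lies in $(\m^{[p^e]})^{[p^{e(n-1)}]} = \m^{[p^{en}]}$, forcing the entire product into $\m^{[p^{en}]}$ for every $n$. Thus sharp $F$-purity of $(A, \Delta_m)$ is equivalent to $f_m \notin \m^{[p^e]}$. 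Now fix a coefficient field $k \subseteq \widehat A$ and regular parameters $x_1, \ldots, x_d$, so that $\cata A \cong \ultra k[[x_1, \ldots, x_d]]$ by the cited structure theorem. Expanding $f_m = \sum_\alpha c_\alpha^{(m)} x^\alpha$ in $\widehat A$, the condition $f_m \notin \m^{[p^e]}$ says some multi-index $\alpha$ with all entries strictly less than $p^e$ has $c_\alpha^{(m)} \neq 0$. Since only $p^{ed}$ such multi-indices exist, the ultrafilter pigeonhole produces a single $\alpha$ with $\{m : c_\alpha^{(m)} \neq 0\} \in \U$; then $[c_\alpha^{(m)}]_m \in \ultra k$ is nonzero and serves as the $x^\alpha$-coefficient of $f_\infty$ inside $\ultra k[[x_1, \ldots, x_d]]$. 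This simultaneously proves (1) and shows that $f_\infty \notin \cata{\m}^{[p^e]} = (x_1^{p^e}, \ldots, x_d^{p^e})\cata A$; applying the Fedder equivalence in the reverse direction on the $F$-finite regular local ring $\cata A$ gives (2).

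For (3), write $t = u/v$ with $u, v > 0$ integers and set $\fb_m := f_m^v \fa_m^{(p^e-1)u}$, $\fb_\infty := f_\infty^v \fa_\infty^{(p^e-1)u}$, so that $\fb_\infty = [\fb_m]_m$. Lemma \ref{fpt concent} applied to each $(A, \Delta_m, \fa_m)$---and to $(\cata A, \Delta_\infty, \fa_\infty)$, whose hypotheses are furnished by (1) and (2)---translates the inequality $t \le \fpt(A, \Delta_m; \fa_m)$ (resp.\ $t \le \fpt(\cata A, \Delta_\infty; \fa_\infty)$) into $\tfrac{1}{v(p^e-1)} \le \fpt(A; \fb_m)$ (resp.\ $\tfrac{1}{v(p^e-1)} \le \fpt(\cata A; \fb_\infty)$). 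The remaining equivalence is the divisor-free catapower comparison for $F$-pure thresholds of ideals on a regular local ring, which is the corresponding ingredient established in \cite{Sat}.

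The main obstacle is precisely this last, divisor-free step: parts (1) and (2) collapse to a clean finite pigeonhole on monomials, whereas (3) concerns real-valued invariants whose transfer across the catapower requires uniform control of Fedder-type witnesses for a family of ideals, and such control is nontrivial to extract. The role of Lemma \ref{fpt concent} is precisely to decouple that control from the divisor $\Delta$ so that the work in \cite{Sat} for plain ideals can be reused verbatim.
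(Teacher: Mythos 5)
Your overall strategy matches the paper's: reduce (1) and (2) to the non-degeneracy condition $f_m \notin \m^{[p^e]}$ via Fedder's criterion, transfer that condition to the catapower, and for (3) use Lemma~\ref{fpt concent} to strip away the divisor and invoke the divisor-free theory from \cite{Sat}. For parts (1) and (2), you give a self-contained pigeonhole argument on low-degree monomial coefficients in $(\ultra k)[[x_1, \dots, x_d]]$ where the paper simply cites \cite[Lemma 2.19]{Sat}; that substitution is fine and is likely what the cited lemma does anyway.

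There is, however, a real gap in (3). After the Lemma~\ref{fpt concent} reduction, you assert that the equivalence
\[
\tfrac{1}{v(p^e-1)} \le \fpt(\cata A; \fb_\infty)
\quad\Longleftrightarrow\quad
\{m : \tfrac{1}{v(p^e-1)} \le \fpt(A; \fb_m)\} \in \U
\]
is ``the corresponding ingredient established in \cite{Sat}.'' What \cite{Sat} actually provides are two separate facts: the shadow formula $\sh(\ulim_m \fpt(A;\fb_m)) = \fpt(\cata A;\fb_\infty)$ (\cite[Theorem 4.7]{Sat}) and the ACC for $F$-pure thresholds on a fixed regular germ (\cite[Main Theorem]{Sat}). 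The shadow formula gives the $\Leftarrow$ direction immediately, but it is \emph{not} enough for $\Rightarrow$: if $\{m : \fpt(A;\fb_m) < \tfrac{1}{v(p^e-1)}\} \in \U$, the shadow formula only yields $\fpt(\cata A;\fb_\infty) \le \tfrac{1}{v(p^e-1)}$, and equality could occur if $\fpt(A;\fb_m)$ increases strictly to $\tfrac{1}{v(p^e-1)}$ along a $\U$-large set. Excluding that scenario is precisely where the ACC enters. You need to make this two-step argument explicit rather than folding it into a single citation; as written, the converse direction of (3) is unjustified.
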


\begin{proof}
By Lemma \ref{Fedder}, we have $f_m \not\in \m^{[p^e]}$ for every $m$.
It follows from \cite[Lemma 2.19]{Sat} that $f_\infty \not\in \m^{[p^e]}$, which proves (1) and (2).
For (3), take integers $u,v>0$ such that $t=u/v$ and set $\fb_m := f_m^v \cdot \fa_m^{u(p^e-1)}$ for every $m \in \N \cup \{ \infty \}$.
It follows from Lemma \ref{fpt concent} that $\{ m \in \N \mid t \le \fpt(A, \D_m; \fa_m) \} \in \U$ if and only if $\{ m \in \N \mid 1/(v(p^e-1)) \le \fpt(A; \fb_m) \} \in \U$.
We first assume that $\{ m \in \N \mid 1/(v(p^e-1)) \le \fpt(A; \fb_m) \} \in \U$.
Since we have $\sh( \ulim_m \fpt(A; \fb_m)) =\fpt(\cata{A}; \fb_\infty)$ (\cite[Theorem 4.7]{Sat}), we have $1/(v(p^e-1)) \le \fpt(\cata{A}; \fb_\infty)$.
Applying Lemma \ref{fpt concent} again, we have $t \le \fpt(\cata{A}, \D_\infty; \fa_\infty)$.

For the converse implication, we assume that $\{ m \in \N \mid 1/(v(p^e-1)) \le \fpt(A; \fb_m) \} \not\in \U$.
In this case, we have $\{ m \in \N \mid 1/(v(p^e-1)) > \fpt(A; \fb_m) \} \in \U$ and hence we have $1/(v(p^e-1)) \ge \fpt(\cata{A}; \fb_\infty)$.
If $1/(v(p^e-1)) = \fpt(\cata{A}; \fb_\infty)=\sh( \ulim_m \fpt(A; \fb_m))$, then by replacing by a subsequence, we may assume that the sequence $\{ \fpt(A; \fb_m)\}_m$ is a strictly ascending chain, which is contradiction to \cite[Main Theorem]{Sat}.
Therefore, we have $1/(v(p^e-1)) > \fpt(\cata{A}; \fb_\infty)$, which proves $t> \fpt(\cata{A}, \D_\infty; \fa_\infty)$.
\end{proof}

\begin{thm}\label{fpt sh 2}
With the notation above, we have 
\[
\sh( \ulim_m \fpt (A, \D_m ; \fa_m) ) = \fpt (\cata{A}, \D_{\infty}, \fa_\infty) \in \Q.
\]
In particular, if the limit $\lim_{m \to \infty} \fpt (A, \D_m; \fa_m)$ exists, then we have
\[
\lim_{m\to \infty} \fpt (A, \D_m; \fa_m) = \fpt (\cata{A}, \D_\infty, \fa_\infty).
\]
\end{thm}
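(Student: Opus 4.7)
Write $s_m := \fpt(A, \D_m; \fa_m)$ and $s_\infty := \fpt(\cata{A}, \D_\infty; \fa_\infty)$. The plan is to deduce the theorem from the rational-threshold criterion of Proposition \ref{fpt sh 1}(3), once we know $s_\infty$ itself is rational; the argument is then a clean sandwich of the hyperreal $\ulim_m s_m$ between two one-sided bounds.

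First I would show $s_\infty \in \Q$. Since $\cata{A}$ is $F$-finite regular local, since $(\cata{A}, \D_\infty)$ is sharply $F$-pure by Proposition \ref{fpt sh 1}(2), and since $(p^e - 1)\D_\infty = \Div_{\cata{A}}(f_\infty)$ is Cartier, Proposition \ref{fpt fjn} applies and gives $s_\infty \in \Q_{\ge 0}$. Now apply Proposition \ref{fpt sh 1}(3) at the rational number $t = s_\infty + 1 > s_\infty$: the criterion forces $\{m : s_\infty + 1 \le s_m\} \notin \U$, so $\{m : s_m < s_\infty + 1\} \in \U$. Combined with $s_m \ge 0$, this says $0 \le \ulim_m s_m < s_\infty + 1$ in $\ultra{\R}$, so $s := \sh(\ulim_m s_m) \in \R$ is well-defined and satisfies $0 \le s \le s_\infty + 1$.

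Next I would squeeze $s$ to $s_\infty$. If $s_\infty > 0$, taking $t = s_\infty$ in Proposition \ref{fpt sh 1}(3) yields $\{m : s_\infty \le s_m\} \in \U$, hence $s_\infty \le \ulim_m s_m$ in $\ultra{\R}$ and $s_\infty \le s$ after taking shadows. Conversely, for every rational $\epsilon > 0$, applying the criterion at $t = s_\infty + \epsilon$ gives $\{m : s_m < s_\infty + \epsilon\} \in \U$, so $s \le s_\infty + \epsilon$; letting $\epsilon \downarrow 0$ along the rationals yields $s \le s_\infty$. (When $s_\infty = 0$ the lower bound is automatic from $s_m \ge 0$.) This proves $s = s_\infty$. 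Finally, for the ``In particular'' clause, when $L := \lim_{m \to \infty} s_m$ exists, every $\epsilon > 0$ gives $\{m : |s_m - L| < \epsilon\}$ cofinite and therefore in the non-principal ultrafilter $\U$, so $\sh(\ulim_m s_m) = L$, and we conclude $L = s_\infty$.

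The substantive work has already been done in Proposition \ref{fpt sh 1}(3) (which in turn invokes Lemma \ref{fpt concent} and the main theorem of \cite{Sat}) and in the rationality result of Proposition \ref{fpt fjn}. Given these, I do not anticipate any genuine obstacle: the content of the theorem is precisely the observation that the rational-threshold criterion of Proposition \ref{fpt sh 1}(3) can be evaluated at $t = s_\infty$ itself \emph{because} $s_\infty \in \Q$, which is what closes the gap between the two one-sided bounds on $\ulim_m s_m$.
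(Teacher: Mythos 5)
Your proof is correct and takes essentially the same approach as the paper: both rest on the rational-threshold criterion of Proposition \ref{fpt sh 1}(3) and the definition of the shadow. The only cosmetic differences are that the paper establishes boundedness more directly via $\fpt(A, \D_m; \fa_m) \le \fpt(A;\m) = \dim A$ and phrases the squeeze as a biconditional over all rational $t>0$, whereas you explicitly invoke Proposition \ref{fpt fjn} to justify evaluating at $t = s_\infty$ itself.
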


\begin{proof}
We first note that the shadow always exists because we have $\fpt(A, \D_m; \fa_m) \le \fpt(A; \m) = \dim A$ for all $m$.
For any rational number $t > 0$, it follows from Proposition \ref{fpt sh 1} that $t \le \sh(\ulim_m \fpt(A, \D_m; \fa_m))$ if and only if $t \le \fpt( \cata{A}, \D_\infty; \fa_\infty)$, which completes the proof.
\end{proof}

\begin{cor}\label{ACC for RLR}
Suppose that $e>0$ is an integer and $(A, \m)$ is an $F$-finite regular local ring of characteristic $p>0$.
Then the set 
\[
\FPT(A, e) := \left\{ \fpt(A, \D ; \fa) \mid (A, \D) \textup{ is sharply $F$-pure}, (p^e-1)\D \textup{ is Cartier, }  \fa \subsetneq A \right\}
\]
satisfies the ascending chain condition.
\end{cor}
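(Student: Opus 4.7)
The plan is to argue by contradiction using the ultraproduct machinery of Theorem \ref{fpt sh 2} together with the rationality result Proposition \ref{fpt fjn}. Suppose for contradiction that $\FPT(A, e)$ admits a strictly ascending chain $t_1 < t_2 < \cdots$, with $t_m = \fpt(A, \D_m; \fa_m)$ for sharply $F$-pure pairs $(A, \D_m)$ such that $(p^e-1)\D_m$ is Cartier and $\fa_m \subsetneq A$. Since $A$ is regular local and $(p^e-1)\D_m$ is a principal Cartier divisor, I may write $\D_m = \Div_A(f_m)/(p^e-1)$ for some non-zero $f_m \in A$. The chain is bounded above by $\fpt(A; \m) = \dim A$, so it converges to some $t_\infty \in \R$, and clearly $t_\infty > 0$.

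Next, fix a non-principal ultrafilter $\U$ and pass to the catapower $\cata{A}$, which is again $F$-finite regular local. Set $f_\infty := \catae{f_m}$, $\fa_\infty := \catae{\fa_m}$, and $\D_\infty := \Div_{\cata{A}}(f_\infty)/(p^e-1)$. By Proposition \ref{fpt sh 1}, $f_\infty$ is non-zero and $(\cata{A}, \D_\infty)$ is sharply $F$-pure, and Theorem \ref{fpt sh 2} then yields
\[
t_\infty \;=\; \lim_{m \to \infty} t_m \;=\; \sh(\ulim_m t_m) \;=\; \fpt(\cata{A}, \D_\infty; \fa_\infty).
\]
Because $\cata{A}$ is $F$-finite regular local and $(p^e-1)\D_\infty$ is Cartier, Proposition \ref{fpt fjn} applies to the right-hand side and forces $t_\infty \in \Q$.

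Finally, I invoke the ``only if'' direction of Proposition \ref{fpt sh 1}(3) with the rational number $t = t_\infty > 0$: the equality $t_\infty = \fpt(\cata{A}, \D_\infty; \fa_\infty)$ entails $t_\infty \le \fpt(\cata{A}, \D_\infty; \fa_\infty)$, and therefore $\{m \in \N : t_\infty \le t_m\} \in \U$. But this set is empty, since the chain is strictly ascending with limit $t_\infty$, contradicting the axiom that $\emptyset \notin \U$. I do not expect a serious obstacle, as all three inputs (Theorem \ref{fpt sh 2}, Proposition \ref{fpt fjn}, and Proposition \ref{fpt sh 1}(3)) are already in place and the argument is the standard ``shadow-of-a-jump'' contradiction. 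The only point requiring care is the uniformity of the Gorenstein index across the chain, which is precisely what is built into the definition of $\FPT(A,e)$: this uniformity is what allows the single element $f_\infty \in \cata{A}$ to cut out a $\Q$-Cartier divisor of the required form on $\Spec \cata{A}$, so that Proposition \ref{fpt fjn} can be brought to bear on the limit.
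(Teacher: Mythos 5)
Your argument is correct and follows essentially the same route as the paper: assume a strictly ascending chain, pass to the catapower via Proposition \ref{fpt sh 1} and Theorem \ref{fpt sh 2} to identify the limit with $\fpt(\cata{A}, \D_\infty; \fa_\infty)$, use rationality to see this limit is rational, and then derive a contradiction from Proposition \ref{fpt sh 1}(3). The only cosmetic difference is that you invoke Proposition \ref{fpt fjn} directly on the regular local ring $\cata{A}$ for rationality, whereas the paper cites Corollary \ref{rat}; in the regular case these are interchangeable.
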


\begin{proof}
We assume the contrary.
Then there exist sequences $\{\D_m \}_m$ and $\{\fa_m\}$ such that $\{ \fpt(A, \D_m ; \fa_m) \}_{m \in \N}$ is a strictly ascending chain.
Set $t : = \lim_m \fpt( A, \D_m; \fa_m)$.
By Corollary \ref{rat} and Corollary \ref{fpt sh 2}, we have $t =\fpt(\cata{A}, \D_\infty; \fa_\infty) \in \Q$.

Since $t$ is rational and $\fpt(A, \D_m ; \fa_m)<t$ for all $m$, it follows from Proposition \ref{fpt sh 1} that $ \fpt(A, \D_\infty ; \fa_\infty) <t $, which is contradiction.
\end{proof}

For a Noetherian local ring $(R,\m)$, we denote by $\emb(R)$ the embedding dimension of $R$.

\begin{thm}[Main Theorem]\label{main}
Fix positive integers $e$ and $N$.
Suppose that $T$ is any set such that every element of $T$ is an $F$-finite Noetherian normal local ring $(R,\m)$ with $\emb(R) \le N$.
Let $\FPT(T,e)$ be the set of all $F$-pure thresholds $\fpt(R, \D ;\fa)$ such that
\begin{itemize}
\item $R$ is an element of $T$,
\item $\fa$ is a proper ideal of $R$, and
\item $\D$ is an effective $\Q$-Weil divisor on $X=\Spec R$ such that $(R,\D)$ is sharply $F$-pure and $(p^e-1)(K_X+\D)$ is Cartier.
\end{itemize}
Then the set $\FPT(T, e)$ satisfies the ascending chain condition.
\end{thm}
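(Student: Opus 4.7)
I would argue by contradiction. Suppose $\FPT(T,e)$ contains a strictly ascending sequence $t_m := \fpt(R_m, \D_m; \fa_m)$ with data as in the statement. By Lemma~\ref{F-sing basic}(5) I may replace each $R_m$ by its $\m$-adic completion without affecting $t_m$ or $\emb(R_m)$, so every $R_m$ is complete. Cohen's structure theorem in the $F$-finite setting then writes $R_m = A_m/I_m$ with $A_m$ an $F$-finite complete regular local ring of dimension $\emb(R_m) \le N$; after passing to a subsequence I may assume $\dim A_m = n$ is constant. $F$-adjunction (Proposition~\ref{F-adj}) produces effective $\Q$-Weil divisors $\D_m'$ on $\Spec A_m$ with $(p^e-1)\D_m'$ Cartier, $(A_m, \D_m')$ sharply $F$-pure, and $t_m = \fpt(A_m, \D_m'; \widetilde{\fa}_m)$, where $\widetilde{\fa}_m \subseteq A_m$ is the lift of $\fa_m$. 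Writing $\D_m' = \Div_{A_m}(f_m)/(p^e - 1)$ with $f_m \in A_m$, Lemma~\ref{Fedder} (applied with $n=1$ and trivial ideal) forces $f_m \notin \m_m^{[p^e]}$ for every $m$.

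Next, I would fix a non-principal ultrafilter $\U$ on $\N$ and form the catapower $\cata{A} := (\prod_m A_m/\U)/\bigcap_{N \ge 1} (\ultra{\m})^N$ of the \emph{varying} sequence $\{A_m\}$, where $\ultra{\m} := \prod_m \m_m /\U$. Adapting the discussion of Section~2 from a single regular local ring to such a sequence, $\cata{A}$ is an $F$-finite complete regular local ring of dimension $n$. The element $f_\infty := \catae{f_m} \in \cata{A}$ is nonzero, because $f_m \notin \m_m^{[p^e]}$ propagates to $f_\infty \notin (\ultra{\m})^{[p^e]}$ by the ultraproduct argument of \cite[Lemma~2.19]{Sat}; hence $\D_\infty := \Div_{\cata{A}}(f_\infty)/(p^e-1)$ is a well-defined effective $\Q$-Weil divisor with $(p^e-1)\D_\infty$ Cartier and $(\cata{A}, \D_\infty)$ sharply $F$-pure by the same Fedder criterion. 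Setting $\fa_\infty := [\widetilde{\fa}_m]_m \subseteq \cata{A}$, one may then rerun the proofs of Proposition~\ref{fpt sh 1} and Theorem~\ref{fpt sh 2} in this varying-rings setting to obtain the identity $\fpt(\cata{A}, \D_\infty; \fa_\infty) = \sh(\ulim_m t_m) =: t$, together with the key equivalence: for every positive rational $t$, one has $t \le \fpt(\cata{A}, \D_\infty; \fa_\infty)$ if and only if $\{m \in \N : t \le t_m\} \in \U$.

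Finally, Corollary~\ref{rat} applied to $(\cata{A}, \D_\infty, \fa_\infty)$ forces $t \in \Q$, while $t_m < t$ for all $m$ yields $\{m : t \le t_m\} = \emptyset \notin \U$. But $t = \fpt(\cata{A}, \D_\infty; \fa_\infty)$ gives $t \le \fpt(\cata{A}, \D_\infty; \fa_\infty)$, contradicting the equivalence above. The chief obstacle I anticipate is extending the ultrapower-catapower machinery of Section~3, developed for a fixed $F$-finite regular local ring, to a sequence of such rings of common dimension but with varying residue fields $k_m$. Since Fedder's criterion (Lemma~\ref{Fedder}), Lemma~\ref{fpt concent}, and the concentration argument in the proof of Proposition~\ref{fpt sh 1} are all local to each $A_m$, the extension should be formal; the delicate points are verifying that $\cata{A}$ is $F$-finite regular local of dimension $n$ (so that Corollary~\ref{rat} and Proposition~\ref{fpt fjn} apply to it) and that the non-containment $f_m \notin \m_m^{[p^e]}$ actually propagates through the catapower to give $f_\infty \ne 0$.
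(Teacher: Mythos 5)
Your overall strategy (contradiction, completion, Cohen structure theorem, $F$-adjunction, catapower, rationality) is in the same spirit as the paper, but the paper takes a shorter route that avoids the ``varying sequence'' catapower entirely, and your version of that step contains a genuine gap.

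The paper first chooses a \emph{single} $F$-finite field $k$ into which all the residue fields $R/\m$, for $(R,\m) \in T$, embed (for instance, an algebraically closed field of large enough cardinality; such a field is perfect, hence $F$-finite). It then sets $A := k[[x_1, \dots, x_N]]$ and shows, via Lemma~\ref{F-sing basic}~(5), Proposition~\ref{F-adj}, and Lemma~\ref{Fedder}, that $\FPT(T,e) \subseteq \FPT(A,e)$. At that point the ascending chain condition follows directly from Corollary~\ref{ACC for RLR}, which has already been proved for a fixed $F$-finite regular local ring. The coefficient-field extension $k_R \hookrightarrow k$ is what Lemma~\ref{Fedder} is doing in the paper's proof: the Fedder-type membership test is insensitive to faithfully flat coefficient-field extension, so the $F$-pure threshold is preserved when passing from $k_R[[x_1,\dots,x_N]]$ to $k[[x_1,\dots,x_N]]$.

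Your route instead forms a catapower of a varying sequence $\{A_m\}$ with varying residue fields $k_m$, and you flag ``verifying that $\cata{A}$ is $F$-finite regular local of dimension $n$'' as a delicate point. That concern is not merely delicate; it can actually fail. If the $p$-degrees $[k_m : k_m^p]$ are unbounded --- which is allowed, since $T$ is an arbitrary set of $F$-finite local rings with bounded embedding dimension but no constraint on residue fields --- then the ultraproduct $K = \ulim_m k_m$ satisfies $[K : K^p] > p^n$ for every $n$, hence $K$ is not $F$-finite, and neither is $\cata{A}$. Without $F$-finiteness you cannot apply Corollary~\ref{rat}, Proposition~\ref{fpt fjn}, or any of the test-module machinery to $(\cata{A}, \D_\infty, \fa_\infty)$, so the contradiction at the end of your argument is unavailable. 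The fix is exactly the paper's preliminary reduction: replace all the $A_m$ by a single $A = k[[x_1,\dots,x_N]]$ over a common $F$-finite field before invoking any ultraproduct machinery, which reduces you to a fixed ring where Corollary~\ref{ACC for RLR} (or, if you prefer, a direct rerun of Proposition~\ref{fpt sh 1} and Theorem~\ref{fpt sh 2}) applies without modification.
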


\begin{proof}
Take an $F$-finite field $k$ such that for every $(R,\m) \in T$, there exists a field extension $R/\m \subseteq k$.
Set $A : = k[[x_1, \dots, x_N]]$.
Then it follows from Lemma \ref{F-sing basic} (6), Proposition \ref{F-adj} and Lemma \ref{Fedder} that we have the inclusion $\FPT(T, e) \subseteq \FPT(A, e)$, which proves that the set $\FPT(T, e)$ satisfies the ascending chain condition.
\end{proof}

\begin{cor}\label{var ACC}
Suppose that $X$ is a normal variety over an $F$-finite field.
Fix an integer $e>0$.
Let $\FPT(X, e)$ be the set of all $\lfpt(X, \D;\fa)$ such that
\begin{itemize}
\item $\fa$ is a proper coherent ideal sheaf on $X$ and 
\item $\D$ is an effective $\Q$-Weil divisor on $X$ such that $(X,\D)$ is sharply $F$-pure and $(p^e-1)(K_X+\D)$ is Cartier.
\end{itemize}
The set $\FPT(X,e)$ satisfies the ascending chain condition.
\end{cor}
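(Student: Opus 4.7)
The plan is to deduce the global statement from the local Main Theorem (Theorem \ref{main}) by localizing at the point where the $F$-pure threshold is attained.

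First, I would apply Lemma \ref{fpt min}: for every admissible pair $(\D, \fa)$ on $X$, there exists a point $x \in X$ such that
\[
\fpt(X, \D; \fa) = \fpt(\sO_{X,x}, \D_x; \fa_x).
\]
The pair $(\sO_{X,x}, \D_x)$ inherits sharp $F$-purity from $(X, \D)$ by definition, and the Cartier hypothesis $(p^e-1)(K_X + \D)$ Cartier localizes to $(p^e-1)(K_{\Spec \sO_{X,x}} + \D_x)$ Cartier on $\Spec \sO_{X,x}$. Thus every element of $\FPT(X, e)$ arises as an $F$-pure threshold of the type considered in Theorem \ref{main}, computed on some local ring $\sO_{X,x}$.

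Next, I would verify that the collection $T := \{\sO_{X,x} \mid x \in X\}$ satisfies the hypotheses of Theorem \ref{main}. Each $\sO_{X,x}$ is an $F$-finite Noetherian normal local ring, since $X$ is a normal variety of finite type over an $F$-finite field. It remains to produce a uniform bound on embedding dimensions. Cover $X$ by finitely many affine opens $\Spec B_i$, each a quotient of a polynomial ring $k[x_1, \dots, x_{N_i}]$ over the base field $k$. Taking $N := \max_i N_i$, we get $\emb(\sO_{X,x}) \le N$ for every $x \in X$.

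Finally, Theorem \ref{main} applied to this $T$ (with the same $e$) yields that $\FPT(T, e)$ satisfies the ascending chain condition. Combined with the inclusion $\FPT(X, e) \subseteq \FPT(T, e)$ from the first paragraph, this gives the corollary. There is no real obstacle here: all the substantive work has already been done in Lemma \ref{fpt min} and Theorem \ref{main}, and the only new observation is the uniform boundedness of embedding dimensions on a variety of finite type.
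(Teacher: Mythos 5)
Your proof is correct and follows the same route as the paper: apply Lemma \ref{fpt min} to reduce to the local rings $\sO_{X,x}$, observe the embedding dimensions are uniformly bounded since $X$ is of finite type, and invoke Theorem \ref{main}. The only difference is that you spell out the embedding-dimension bound explicitly, which the paper leaves implicit.
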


\begin{proof}
Set $T : = \{ \sO_{X,x} \mid x \in X \}$.
It follows from Lemma \ref{fpt min} that $\FPT(X, e) \subseteq \FPT(T , e)$, which completes the proof.
\end{proof}

\begin{lem}[cf. \textup{\cite[Proposition 6.3]{dFEM}}]\label{lci emb}
Let $(R,\m)$ be an $F$-finite Noetherian normal local ring of dimension $d$.
If $R$ is a complete intersection and sharply $F$-pure, then $\emb(R) \le 2 d$.
\end{lem}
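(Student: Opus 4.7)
The plan is to pass to the completion of $R$, apply Cohen's structure theorem to write $R$ as a quotient of a power series ring by a regular sequence in the square of the maximal ideal, and then combine the Fedder-type criterion for complete intersections with an elementary $\m$-adic filtration estimate. By Lemma \ref{F-sing basic}(5), I may replace $R$ by its $\m$-adic completion and assume $R$ is complete. Cohen's structure theorem then gives an isomorphism $R \cong A/I$, where $A = k[[x_1, \dots, x_N]]$ is an $F$-finite regular local ring with maximal ideal $\m_A$ and $N = \emb(R)$, and $I \subseteq \m_A^2$. Since $R$ is a complete intersection of dimension $d$, the ideal $I$ is generated by a regular sequence $f_1, \dots, f_c$ of length $c = N - d$, which I may take so that $f_j \in \m_A^2$ for every $j$.

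Next, sharp $F$-purity of $(R,0)$ says precisely that $F^e : R \to F^e_* R$ splits for some $e > 0$, and by Fedder's criterion this is equivalent to $(I^{[p^e]} : I) \not\subseteq \m_A^{[p^e]}$. Because $A/I$ is Gorenstein, the standard complete-intersection identity
\[
(I^{[p^e]} : I) = I^{[p^e]} + \bigl((f_1 \cdots f_c)^{p^e - 1}\bigr)
\]
holds, and the splitting condition reduces to
\[
(f_1 \cdots f_c)^{p^e - 1} \notin \m_A^{[p^e]}.
\]

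The desired bound then follows from a short order estimate. Each $f_j \in \m_A^2$ forces $(f_1 \cdots f_c)^{p^e - 1} \in \m_A^{2c(p^e - 1)}$, while the socle of $A/\m_A^{[p^e]}$ is spanned by $x_1^{p^e - 1} \cdots x_N^{p^e - 1}$ in degree $N(p^e - 1)$, so $\m_A^{N(p^e - 1) + 1} \subseteq \m_A^{[p^e]}$. If $2c > N$, then $2c(p^e - 1) \ge N(p^e - 1) + 1$ and $(f_1 \cdots f_c)^{p^e - 1} \in \m_A^{[p^e]}$, contradicting sharp $F$-purity. Hence $2c \le N$, giving $\emb(R) = N \le 2d$. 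The only delicate point is the identification of the colon ideal for a complete intersection, which is a standard consequence of the Gorenstein property of $A/I$; everything else is a direct filtration count.
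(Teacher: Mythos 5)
Your proof is correct and follows essentially the same strategy as the paper's: present $R$ as a quotient of a regular local ring by a regular sequence $f_1, \dots, f_c$ contained in $\m_A^2$, invoke the Fedder-type criterion for complete intersections to get $(f_1\cdots f_c)^{p^e-1}\notin\m_A^{[p^e]}$, and conclude by the filtration count $\m_A^{N(p^e-1)+1}\subseteq\m_A^{[p^e]}$. The only cosmetic differences are that you work with a general exponent $e$ and pass through the completion explicitly, whereas the paper cites \cite[Proposition~2.6]{HW} directly to get the statement at $e=1$; also note the colon-ideal identity $(I^{[p^e]}:I)=I^{[p^e]}+\bigl((f_1\cdots f_c)^{p^e-1}\bigr)$ is specific to ideals generated by a regular sequence (due to Fedder), not a consequence of Gorensteinness alone, though of course it applies here.
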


\begin{proof}
Set $N: = \emb(R)$ and $c: = N-d$.
There exists an $F$-finite regular local ring $A$ and a regular sequence $f_1, \dots, f_c \in A$ with $f_i \in \m^2 $ such that $R \cong A/(f_1, \dots , f_c)$.
By \cite[Proposition 2.6]{HW}, we have $(f_1 \cdots f_c)^{p-1} \not\in \m^{[p]}$.

Since $f_i \in \m^2$ for every $i$, we have $(f_1 \cdots f_c)^{p-1} \in \m^{2 c(p-1)}$.
It follows from the inclusion $\m^{N (p-1) +1} \subseteq \m^{[p]}$ that we have $2c \le N$, which proves $N \le 2d$.
\end{proof}

\begin{cor}[Theorem \ref{intro lci}]\label{lci ACC}
Let $n \ge 1$ be an integer.
Suppose that $T$ is any set such that every element of $T$ is an $n$-dimensional Noetherian normal connected l.c.i. scheme which is sharply $F$-pure.
Then, the set 
\[
 \{ \lfpt(X; \fa) \mid X \in T, \fa \subsetneq \sO_X  \}
\]
satisfies the ascending chain condition.

\end{cor}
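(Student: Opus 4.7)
The plan is to reduce the statement to the Main Theorem (Theorem \ref{main}) via two observations: (a) the $F$-pure threshold of a sheaf-theoretic triple is controlled by its stalks, so the global scheme-theoretic ACC follows from the local ACC; and (b) on a normal local l.c.i. ring that is sharply $F$-pure, both the embedding dimension and the Gorenstein index are uniformly controlled, so the relevant set of local $F$-pure thresholds already falls inside a set $\FPT(T', e)$ to which the Main Theorem applies.

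More concretely, first I would form the set
\[
T' \; := \; \{ \sO_{X, x} \mid X \in T, \; x \in X \}.
\]
Every $R \in T'$ is an $F$-finite Noetherian normal local ring of dimension at most $n$, is locally a complete intersection, and is sharply $F$-pure (these properties localize). By Lemma \ref{fpt min}, for any $X \in T$ and any proper coherent ideal $\fa \subsetneq \sO_X$ one has
\[
\fpt(X; \fa) \; = \; \min\{ \fpt(\sO_{X,x}; \fa_x) \mid x \in X \},
\]
so in particular $\fpt(X; \fa) = \fpt(\sO_{X,x}; \fa_x)$ for some $x$. Hence the set we want to study is contained in the set of $F$-pure thresholds of proper ideals on rings $R \in T'$ with trivial boundary divisor $\D = 0$.

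Next I would verify that this last set fits into the framework of the Main Theorem. Every $R \in T'$ is l.c.i., hence Gorenstein, so the canonical divisor $K_{\Spec R}$ is Cartier; thus $(p-1)(K_{\Spec R} + 0)$ is Cartier, i.e.\ we may take $e = 1$ and $\D = 0$ in the definition of $\FPT(T', 1)$. By Lemma \ref{lci emb}, every sharply $F$-pure local l.c.i. $R \in T'$ satisfies $\emb(R) \le 2 \dim R \le 2n$. Therefore, with $N := 2n$ and $e := 1$, every element of $T'$ meets the hypotheses of Theorem \ref{main}, giving the inclusion
\[
\{ \fpt(X; \fa) \mid X \in T, \; \fa \subsetneq \sO_X \} \; \subseteq \; \FPT(T', 1).
\]

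Finally, Theorem \ref{main} asserts that $\FPT(T', 1)$ satisfies the ascending chain condition, and any subset of an ACC set is again ACC, which finishes the proof. There is no real obstacle here: all the nontrivial work has been done in the Main Theorem, in Lemma \ref{lci emb} (which supplies the uniform embedding dimension bound), and in the Gorenstein property of l.c.i.\ rings (which supplies the Cartier condition on $(p-1)K_X$); the corollary is simply the assembly of these ingredients via the localization lemma.
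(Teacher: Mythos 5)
Your proposal is correct and follows essentially the same route as the paper: bound the embedding dimension of the local rings by $2n$ via Lemma \ref{lci emb}, use the Gorenstein property of l.c.i.\ rings to make $K_X$ Cartier (so $e=1$ works), reduce to local rings via Lemma \ref{fpt min}, and invoke the Main Theorem. The paper's own proof is terser and leaves the localization step via Lemma \ref{fpt min} implicit (that step is spelled out only in the preceding Corollary \ref{var ACC}); your write-up correctly makes it explicit.
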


\begin{proof}
It follows from Lemma \ref{lci emb} that $\emb(\sO_{X,x}) \le 2 n$ for every $X \in T$ and every $x \in X$.
Since every $X \in T$ is Gorenstein, we apply the main theorem.
\end{proof}







\end{document}